\def\compl{\complement}
\newcommand{\per}{{\text{\it per}}}
\newcommand{\Be}{\begin{equation}}
\newcommand{\Ee}{\end{equation}}
\newcommand{\Bea}{\begin{eqnarray}}
\newcommand{\Eea}{\end{eqnarray}}
\newcommand{\Beas}{\begin{eqnarray*}}
\newcommand{\Eeas}{\end{eqnarray*}}
\newcommand{\Benu}{\begin{enumerate}}
\newcommand{\Eenu}{\end{enumerate}}
\newcommand{\Bi}{\begin{itemize}}
\newcommand{\Ei}{\end{itemize}}
\def\intslash{\rlap{\kern  .32em $\mspace {.5mu}\backslash$ }\int}
\def\fint{\intslash}
\def\qsl{{\rlap{\kern  .32em $\mspace {.5mu}\backslash$ }\int_{Q_x}}}
\def\cf{{\it cf}}
\def\loc{{\text{\rm loc}}}
\def\dist{{\text{\it dist}}}
\def\inn#1#2{\langle#1,#2\rangle}
\def\biginn#1#2{\big\langle#1,#2\big\rangle}
\def\noi{\noindent}
\def\lc{\lesssim}
\def\gc{\gtrsim}
\def\eps{\varepsilon}
\def\ka{\kappa}
\def\la{\lambda}
              \def\Om{\Omega}
\def\fM{{\mathfrak {M}}}
\def\fS{{\mathfrak {S}}}
\def\bbR{{\mathbb {R}}}
\def\bbT{{\mathbb {T}}}
\def\bbZ{{\mathbb {Z}}}
\def\cA{{\mathcal {A}}}
\def\cB{{\mathcal {B}}}
\def\cC{{\mathcal {C}}}
\def\cE{{\mathcal {E}}}
\def\cI{{\mathcal {I}}}
\def\cK{{\mathcal {K}}}
\def\cM{{\mathcal {M}}}
\def\cN{{\mathcal {N}}}
\def\cU{{\mathcal {U}}}
\def\cV{{\mathcal {V}}}
\def\cY{{\mathcal {Y}}}
\def\cZ{{\mathcal {Z}}}
\def\R{{\hbox{\bf R}}}
\def\I{{\hbox{\bf I}}}
\def\J{{\hbox{\bf J}}}
\def\be#1{\begin{equation}\label{ #1}}
\def\endeq{\end{equation}}
\def\endal{\end{align}}
\def\bas{\begin{align*}}
\def\eas{\end{align*}}
\def\bi{\begin{itemize}}
\def\ei{\end{itemize}}
\def\eps{\varepsilon}
\def\emph#1{{\it #1}}
\def\textbf#1{{\bf #1}}
\def\bbone{{\mathbbm 1}}
\theoremstyle{plain}
  \newtheorem{theorem}{Theorem}[section]
   \newtheorem{prop}[theorem]{Proposition}
   \newtheorem{lemma}[theorem]{Lemma}
   \newtheorem{corollary}[theorem]{Corollary}
\theoremstyle{remark}
   \newtheorem{remark}[theorem]{Remark}
   \newtheorem{remarks}[theorem]{Remarks}
\theoremstyle{definition}
\begin{document}

\title[Singular integrals and mixing flows]{Singular integrals and  \\ a problem on mixing flows}

\author[M. Had\v zi\'c   \ \ \  A. Seeger  \ \ \  C. Smart \ \ \   B. Street]
{Mahir Had\v zi\'c   \ \    Andreas Seeger  \ \  Charles K. Smart \ \    Brian Street}

\address{Mahir Had\v zi\'c\\Department of Mathematics\\
King's College London\\ 
Strand, London WC2R 2LS, UK}
\email{mahir.hadzic@kcl.ac.uk}

\address{Andreas Seeger \\ Department of Mathematics \\ University of Wisconsin \\480 Lincoln Drive\\ Madison, WI, 53706, USA} \email{seeger@math.wisc.edu}

\address{Charles K. Smart\\ Department of Mathematics \\ University of Chicago
\\734 S. University Avenue\\
Chicago, IL 60637, USA}
\email{smart@math.uchicago.edu}

\address{Brian Street \\ Department of Mathematics \\ University of Wisconsin \\480 Lincoln Drive\\ Madison, WI, 53706, USA} \email{street@math.wisc.edu}

\begin{abstract}  We prove a result related to Bressan's mixing problem. We establish an inequality for the change of Bianchini semi-norms of characteristic functions under the  flow generated by a divergence free time dependent vector field. The approach leads to a bilinear singular integral operator  for which we prove bounds  on Hardy spaces. We include additional observations about the approach and  a discrete toy version of Bressan's problem.

\end{abstract}
\subjclass[2010]{34C11, 35Q35, 37C10, 42B20}
\keywords{}


\thanks{Research supported in part by  National Science Foundation grants}
\maketitle
\section{Introduction}\label{intro}


\subsection{\it Mixing flows}\label{mixingflows} 


We consider subsets $A$ of $\bbT^d\equiv \bbR^d/\bbZ^d$. For $0<r<1/4$, 
$x\in \bbR^d$  let  $B_r(x)$  denote the ball of radius 
$r$ centered at $x$, with respect to the usual  geodesic distance on  $\bbT^d$.
A measurable set $E\subset \bbT^d$ is {\it mixed  at scale $r$,} with mixing constant 
$\ka\in (0,1/2)$, if 
\begin{equation}\label{mixing}\ka
 \,\le\,\frac{ |E\cap B_r(x)| }{ |B_r(x)|}\,\le\,  1-\ka ,\quad 
 \forall
 x\in \bbT^d.\end{equation}


Let $v$  
be a time-dependent,  a priori  
 smooth vector field, defined on $\bbT^d\times [0,T]$ with values in the  tangent bundle of the torus. The vector field can be considered a vector field 
$(x,t)\mapsto v(x,t)$ on $\bbR^d$ 
which is periodic in $x$, i.e. 
$$v(x+k,t)=v(x,t) \text{ for all } (x,t)\in \bbR^d\times \bbR, \
 k\in \bbZ^d\,.$$ We assume that 
$$\text{div}_x v(x,t)=0$$
and let  $\Phi $ be the  flow generated by $v$. I.e. $\Phi$ satisfies 
$$
\begin{gathered}\frac{\partial}{\partial t} \Phi(x,t)= v(\Phi(x,t),t), \\
\Phi(x,0)=x.
\,\end{gathered}$$
For every $t$ the map $x\mapsto \Phi(x,t)$ is a volume  preserving diffeomorphism on
$\bbR^d$
satisfying  $$\Phi(x+k,t)-k=\Phi(x,t),\quad \text{$x\in \bbR^d$, $k\in \bbZ^d$.}$$
In what follows we shall also use the notation
$\Phi_t(x)=\Phi(x,t)$. 
We are interested in mixing flows which transport an unmixed set $\Om$ at time $t=0$ to a set $\Phi_T(\Om)$
mixed at scale $\eps$ at time $t=T$.
\subsection{\it Bressan's problem}\label{bressanprbl}
Split  $\bbT^d$ as $\Omega_L\cup \Omega_R$ with $\Omega_R=\Omega_L^\complement$  where
\Be\label{halftorus}\Omega_L=\{x: 0\le x_1<\frac12\},  \quad
\Om_R=\{x: \frac 12\le x_1<1\}. \Ee
Let $0<\eps<1/4$.  Consider a periodic flow $\Phi_t$ generated by a smooth time dependent divergence free vector field, and {\it assume that at time $t=T$ the flow mixes  $\Om_L$ at scale $\eps$;} i.e. the set $E=\Phi_T(\Om_L)$ satisfies  \eqref{mixing} with $r=\eps$.
Bressan \cite{bressan-website} asks  (setting $\kappa=1/3$) whether  there is a universal constant $c_d>0$ such that
\Be\label{Bressanineq} 
\int_0^T\int_{[0,1)^d}| D_x v(x,t)| \, dx\, dt \ge c_d \log(1/\eps)\,.
\Ee
As noted in \cite{bressan-website}
it suffices to consider the case $T=1$, by replacing $v(x,t)$ with $Tv(x,t/T)$.  In \cite{bressan},  Bressan formulated 
a more general conjecture for mildly compressible flows.

Bressan's conjecture is still open at the time of this writing. Therefore it is of interest to ask  for 
corresponding lower bounds if the $L^1(\bbT^d)$ norm is replaced by a larger norm. That is,  under the assumption that the flow generated by $v$ mixes the set  at scale $\eps$ 
with mixing constant $\gamma$, do we have a universal lower bound of the form
\Be\label{BressanineqY} 
\int_0^T\| D_x v(\cdot,t)\|_\cY  \,dt \ge c_\cY(\ka) \log(1/\eps)\,
\Ee
for suitable function spaces $\cY\subset L^1(\bbT^d)$
or even $\cY\subset M(\bbT^d)$ with $M(\bbT^d)$ the space of bounded Borel measures on $\bbT^d$?
Crippa and De Lellis \cite{crippa-deLellis} showed this
 for $\cY=L^p(\bbT^d)$, $1<p<\infty$  and also 
for the 
space $\cY$ consisting of functions for which the Hardy-Littlewood maximal function $M_{HL}f$ belongs to $L^1(\bbT^d)$, i.e. for $\cY=L\log L(\bbT^d)$. We shall  discuss two ways to improve $\cY$ to a local Hardy space. In \S\ref{toymodel} we consider a discrete toy problem on $\bbT^2$ for which we prove an analogue of the $L^1$ conjecture, although this toy model does not yield significant information for the general Bressan problem.
It should be noted that the lower bound $\log(1/\eps)$ is sharp and cannot even be improved by working with $L^p$ spaces, see the recent results  by Yao and Zlato\v s \cite{yao-zlatos} and by Alberti, Crippa and Mazzucato
\cite{alberti-etal}. 

\subsection{\it An approach to Bressan's problem via a Bianchini  semi-norm}\label{bianchini-section}
We denote by  $$\intslash_{B_r(x)} f(y) dy=\frac{1}{|B_r(x)|}\int_{B_r(x)} f(y) dy$$ the average of $f$ over the ball $B_r(x)$. 
For $\eps<1/8$ define the truncated Bianchini semi-norm by
$$
\|f\|_{\cB(\eps)}:= \int_{\eps}^{1/4} \int_{\bbT^d} \big|f(x)- \intslash_{B_r(x)} f(y) dy
 \big| \,dx\, \frac{dr}{r}
$$
and let the {\it Bianchini space} consist of all $L^1(\bbT^d)$ functions  for which 
$$\|f\|_{\cB}: =\sup_{\eps<1/4}\|f\|_{\cB(\eps)}
\,<\,\infty.$$ 
This space was proposed by Bianchini in \cite{bianchini} as a measure for mixing in a one-dimensional shuffling problem. There it was denoted  $\dot B^{0,1,1}$ in reference to Besov although this space 
does not actually belong to the usual scale of Besov spaces. The connection with mixing is given by the following

\noi{\it Observation:} If $E$ is mixed at scale $\eps>0$, with mixing constant $\ka$, then
$$\Big| \bbone_E(x)-\intslash_{B_r(x)}\bbone_E(y) \,dy \Big| \ge \kappa \text{ a.e.}\,\quad \forall r>\eps.$$
Hence integrating in $r$ and $x$ one  gets $$\|\bbone_E\|_{\cB}\ge \|\bbone_E\|_{\cB(\eps/2)}\gc \ka\log(1/\eps)\,.$$
Also by straightforward computation $\|\bbone_{\Om_L}\|_\cB \lc 1$ for $\Om_L$ as in \eqref{halftorus}.
Our main result is  an inequality for the change of the Bianchini norm of a characteristic function under the flow, which does not itself refer to mixing. In this result $h^1(\bbT^d)$ 
denotes the local Hardy space (\cite{goldberg}); note that for $p>1$, we have the embeddings
$L^p(\bbT^d)\subsetneq L\log L(\bbT^d) \subsetneq	 h^1(\bbT^d)\subsetneq L^1(\bbT^d).$
\begin{theorem}\label{bressan-bianch}
Let $v$, $\phi$ be as above. Then the inequality 
$$\|\bbone_{\phi_T(A)}\|_{\cB} \le
\|\bbone_{A}\|_{\cB} + C_d \int_0^T \|Dv(\cdot,t)\|_{h^1(\bbT^d)}
\,dt$$
holds 
for any measurable subset $A\subset \bbT^d$, with $C_d$ a universal constant.
\end{theorem}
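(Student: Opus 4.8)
The plan is to track the quantity $\|\bbone_{\phi_t(A)}\|_{\cB}$ as a function of $t$ and show that its derivative is controlled by $C_d\|Dv(\cdot,t)\|_{h^1}$; integrating from $0$ to $T$ then gives the claim. Write $f_t = \bbone_{\phi_t(A)}$, so that $f_t$ solves the transport equation $\partial_t f_t + v\cdot\nabla f_t = 0$ (in the weak sense; since $\phi_t$ is a volume-preserving diffeomorphism, $f_t$ stays a characteristic function of a set of fixed measure). For fixed $r\in[\eps,1/4]$ consider
$$
g_r(t) := \int_{\bbT^d} \Big| f_t(x) - \intslash_{B_r(x)} f_t(y)\,dy \Big|\,dx,
$$
so that $\|f_t\|_{\cB(\eps)} = \int_\eps^{1/4} g_r(t)\,\frac{dr}{r}$. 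The idea is to differentiate $g_r(t)$ in $t$. Using the transport equation and the divergence-free condition, $\frac{d}{dt}\big(f_t(x) - \intslash_{B_r(x)} f_t\big)$ can be written, after integration by parts inside the average, as a commutator-type expression involving $v$ and its gradient; the key point is that the singular part of $\frac{d}{dt}$ of the difference quotient is
$$
-\,\Big( v(x,t)\cdot\nabla_x - \intslash_{B_r(x)} v(y,t)\cdot\nabla_y \Big) f_t,
$$
which, upon moving derivatives off $f_t$, becomes an average of $(v(y,t)-v(x,t))\cdot(\text{surface measure of }\partial B_r(x))$ type terms — i.e. the increments of $v$ tested against $f_t$.

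The main step is therefore a pointwise/integral bound: after differentiating and rearranging one should arrive at
$$
\Big|\frac{d}{dt} g_r(t)\Big| \le \int_{\bbT^d} \big|\bbone_{\phi_t(A)}(x)\big|\cdot K_r[Dv(\cdot,t)](x)\,dx \quad\text{or similar},
$$
and then integrating $\int_\eps^{1/4}\frac{dr}{r}$ one recognizes a bilinear singular integral operator acting on the pair $(\bbone_{\phi_t(A)}, Dv(\cdot,t))$ — precisely the operator the abstract says the paper introduces. The crucial estimate is that this bilinear operator maps $L^\infty\times h^1 \to L^1$ (the $L^\infty$ slot absorbs $\|\bbone_{\phi_t(A)}\|_\infty = 1$), with constant depending only on $d$. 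This is where the local Hardy space $h^1$ enters: a naive $L^1$ bound on $Dv$ fails (that would be Bressan's open conjecture), but the cancellation built into the difference $f(x) - \intslash_{B_r(x)} f$, combined with the logarithmic $\frac{dr}{r}$ integration, produces exactly a Calderón–Zygmund-type kernel in the $Dv$ variable, so that $h^1$-atoms are handled by the standard atomic estimate (size, cancellation, and the tail decay of the kernel), while the $L^\infty$ input causes no loss.

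I expect the main obstacle to be making the formal differentiation of $g_r(t)$ rigorous and, more importantly, extracting the right bilinear kernel with the correct cancellation structure: one must integrate by parts to move $\nabla$ from $f_t$ onto the (smooth, compactly-in-$r$-supported) averaging kernel, keep careful track of the boundary term on $\partial B_r(x)$ versus the solid average, and verify that after the $\int \frac{dr}{r}$ integration the resulting kernel $K(x,y)$ acting on $Dv$ genuinely satisfies the size bound $|K(x,y)|\lesssim |x-y|^{-d}$ and the Hörmander-type regularity estimates, uniformly in the $L^\infty$ function being paired against it. A secondary technical point is the passage from smooth $v$ to the statement as written and handling the periodicity (working on $\bbT^d$ rather than $\bbR^d$), plus justifying that $\|f_t\|_{\cB} = \sup_\eps \|f_t\|_{\cB(\eps)}$ is in fact attained/controlled by the $\cB(\eps)$ estimates uniformly — but these should be routine once the bilinear $L^\infty\times h^1\to L^1$ bound is in hand.
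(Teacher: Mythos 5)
Your overall plan matches the paper's: differentiate the Bianchini seminorm along the flow, identify the derivative as a bilinear/trilinear singular integral form in $(\bbone_{\phi_t(A)},\bbone_{\phi_t(A)},Dv)$, and prove an $h^1$ bound for that form. The paper does exactly this (Proposition~\ref{bianchprop} followed by Theorem~\ref{mainsingularintegral}). Two remarks on where your sketch under-sells the difficulty.

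First, a small point: differentiating $g_r(t)=\int|f_t(x)-\fint_{B_r(x)}f_t|\,dx$ in $t$ requires handling the nonsmooth absolute value. The paper sidesteps this by working with $f_A=\bbone_A-\bbone_{A^\complement}\in\{\pm1\}$, for which $f_A(x)-\fint_{B_r(x)}f_A$ automatically has the same sign as $f_A(x)$; the absolute value thus disappears and one gets an \emph{exact identity} $\|\bbone_{\Phi_T(A)}\|_{\cB(\eps)}-\|\bbone_A\|_{\cB(\eps)}=\tfrac{1}{2V_d}\int_0^T\fS_\eps^{\per}[f_{\Phi_t(A)},f_{\Phi_t(A)},v(\cdot,t)]\,dt$, rather than merely a differential inequality. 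Your proposal should make this sign resolution explicit; otherwise the differentiation step is not actually justified.

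Second, and more substantively: the singular integral estimate is not ``the standard atomic estimate.'' Once you freeze $f,g\in L^\infty$ and view $\fS$ as a linear map acting on $Db$, the resulting object is a Christ--Journ\'e $d$-commutator, i.e.\ the kernel involves averages $m_{x,y}[Db]=\int_0^1 Db(sx+(1-s)y)\,ds$ along segments, not pointwise values. It does \emph{not} satisfy Calder\'on--Zygmund size/regularity estimates in the $Db$ variable alone, and there is no off-the-shelf $H^1\to L^1$ theorem for such $d$-commutators (the paper remarks on this explicitly). The paper's actual proof of Theorem~\ref{mainsingularintegral} requires (a) a regularization of $b$ via $\phi_k*b+\sum_{n\ge1}\psi_{k+n}*b$ matched to the scale of the kernel, (b) a genuine David--Journ\'e $T(1)$ argument for the main piece $\sum_k S_k[g,\phi_k*b]$ (and here the divergence-free condition is indispensable: without $\text{div}\,b=0$ one cannot reduce to mean-zero kernels $K_i,K_{ij}$ and verify $T^*1\in BMO$), and (c) a separate Littlewood--Paley/atomic argument for the tail terms $S_k[g,\psi_{k+n}*b]$, yielding a factor $n2^{-n}$ summable in $n$. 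This multi-step decomposition, rather than a one-shot atomic argument with size/H\"ormander bounds, is the actual content; your proposal treats it as routine, which is the gap.
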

Theorem \ref{bressan-bianch}  gives an alternative approach to the results by Crippa and De Lellis. 
By the above discussion  the following implication  on the mixing problem is immediate.
\begin{corollary}\label{bressanhardy}
Let $0<\eps<1/4$ and  let the vector field $v$  satisfy  the assumptions in the Bressan problem stated in \S\ref{bressanprbl}. Then inequality 
\eqref{BressanineqY} holds with $\cY=h^1(\bbT^d)$.
\end{corollary}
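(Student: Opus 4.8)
\medskip
\noindent\textit{Proof strategy.}
Corollary~\ref{bressanhardy} is immediate once Theorem~\ref{bressan-bianch} is available: apply it with $A=\Om_L$. A direct estimate gives $\|\bbone_{\Om_L}\|_{\cB}\le C_0$ with $C_0$ universal (the integrand $\bbone_{\Om_L}-\fint_{B_r(\cdot)}\bbone_{\Om_L}$ is supported in an $O(r)$-neighbourhood of $\{x_1\in\{0,\tfrac12\}\}$ and bounded by $1$, so $\|\bbone_{\Om_L}\|_{\cB(\eps)}\lesssim\int_\eps^{1/4}r\,\tfrac{dr}{r}\lesssim 1$), while by the Observation the mixed set $E=\phi_T(\Om_L)$ satisfies $\|\bbone_E\|_{\cB}\ge\ka\log\tfrac1{4\eps}$. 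Theorem~\ref{bressan-bianch} then yields $\ka\log\tfrac1{4\eps}\le C_0+C_d\int_0^T\|Dv(\cdot,t)\|_{h^1}\,dt$, which is \eqref{BressanineqY} with $\cY=h^1(\bbT^d)$ (for $\eps$ small; the range of $\eps$ bounded away from $0$ is elementary, since there $\log(1/\eps)=O(1)$ while producing a mixed configuration already forces $\int_0^T\|Dv\|_{h^1}$ above a positive $\ka$-dependent constant). So the content is Theorem~\ref{bressan-bianch}, which I would prove as follows.

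Put $f_t=\bbone_{\phi_t(A)}$; since $\text{div}\,v_t=0$ this solves $\partial_tf_t=-\text{div}(v_tf_t)$. With $\mu_r=|B_r|^{-1}\bbone_{B_r}$ and $g_{r,t}=f_t-f_t*\mu_r$ one has $\|f_t\|_{\cB(\eps)}=\int_\eps^{1/4}\|g_{r,t}\|_{L^1}\,\tfrac{dr}{r}$, so it suffices to prove, \emph{uniformly in $\eps$}, that $\|f_T\|_{\cB(\eps)}\le\|f_0\|_{\cB(\eps)}+C_d\int_0^T\|Dv_t\|_{h^1}\,dt$, and then take the supremum over $\eps$. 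One may assume $A$ has smooth boundary, the general measurable case following by $L^1$-approximation of $A$ (each $\|\cdot\|_{\cB(\eps)}$ being Lipschitz on $L^1(\bbT^d)$ for fixed $\eps$, and $\phi_t$ volume preserving).

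Now differentiate in $t$. Because $f_t$ is $\{0,1\}$-valued, $\operatorname{sgn}(g_{r,t})=2f_t-1$ where $g_{r,t}\ne0$, $\partial_tg_{r,t}=0$ where $g_{r,t}=0$, and $\int_{\bbT^d}\text{div}(v_tf_t)*(\delta-\mu_r)=0$; hence $\tfrac{d}{dt}\|g_{r,t}\|_{L^1}=-2\int f_t\,[\text{div}(v_tf_t)*(\delta-\mu_r)]$. Moving the even kernel $\delta-\mu_r$ onto $f_t$, integrating by parts, and cancelling $\int v_tf_t\cdot\nabla f_t=\tfrac12\int v_t\cdot\nabla f_t=0$ reduces this to $\tfrac{d}{dt}\|g_{r,t}\|_{L^1}=-2\int f_t\,v_t\cdot(f_t*\nabla\mu_r)$. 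Integrating in $r$, symmetrising the two convolution variables so that the $v_t$-factor becomes the difference $v_t(x)-v_t(y)=\int_0^1Dv_t(y+s(x-y))(x-y)\,ds$, and changing variables $(x,y)\mapsto(z,w)$ with $w=x-y$, $z=y+sw$ (Jacobian $1$), one arrives at
$$\frac{d}{dt}\|f_t\|_{\cB(\eps)}=c_d\,\langle Dv_t,\cK_\eps(f_t,f_t)\rangle,\qquad \cK_\eps(f,g)_{jk}(z)=\int_0^1\!\!\int_{\eps<|w|<\frac14}\! f(z+(1-s)w)\,g(z-sw)\,\frac{w_jw_k}{|w|^{d+2}}\,dw\,ds ,$$
a bilinear form in $f_t$ paired against the matrix field $Dv_t$; in polar coordinates $w=(\alpha+\beta)\theta$ this reads $\cK_\eps(f,g)_{jk}(z)=\int_{S^{d-1}}\theta_j\theta_k\big(\iint_{\alpha,\beta>0,\ \eps<\alpha+\beta<\frac14}\tfrac{f(z+\alpha\theta)\,g(z-\beta\theta)}{(\alpha+\beta)^2}\,d\alpha\,d\beta\big)\,d\theta$.

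The pointwise size of $\cK_\eps(f,g)$ is only $O\!\big(\log(1/\eps)\big)\|f\|_\infty\|g\|_\infty$, the divergence being concentrated in the diagonal piece $\tfrac{|S^{d-1}|}{d}\log(1/(4\eps))\,fg\cdot\mathrm I$ (the $\theta$-independent part of the inner integrand), which one splits off by adding and subtracting it. Crucially, since $v_t$ is divergence free $Dv_t$ is trace free, so this diagonal piece drops out of the pairing: $\langle Dv_t,\cK_\eps(f_t,f_t)\rangle=\langle Dv_t,\cK_\eps^{\mathrm{tf}}(f_t,f_t)\rangle$, where the trace-free operator $\cK_\eps^{\mathrm{tf}}$ carries the mean-zero spherical weight $\theta_j\theta_k-\tfrac1d\delta_{jk}$. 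The remaining — and, I expect, hardest — step is the uniform bilinear singular integral estimate
$$\bigl\|\cK_\eps^{\mathrm{tf}}(f,g)\bigr\|_{\mathrm{bmo}(\bbT^d)}\le C_d\,\|f\|_{L^\infty}\|g\|_{L^\infty},\qquad 0<\eps<\tfrac18 ,$$
the local Hardy space analogue of the classical mapping $L^\infty\to\mathrm{BMO}$ for a cancellative singular integral; here the operator is genuinely bilinear, with the two $L^\infty$-inputs sampled at the two endpoints of a segment through $z$, and one needs the bound uniformly in the truncation (the pointwise $\log(1/\eps)$ must dissolve into bounded oscillation). I would prove it either via bilinear Calder\'on--Zygmund theory — for each fixed ratio $\tau=s/(1-s)$ the inner operator $(f,g)\mapsto\int f(z+u)g(z-\tau u)K(u)\,du$ with $K(u)=u\otimes u/|u|^{d+2}$ is a bilinear CZ operator, the spherical cancellation supplying the decay needed to integrate over $\tau$ and to absorb the truncation — or by testing the trilinear form directly on $\mathrm{bmo}$-atoms of $Dv_t$, reducing to the oscillation bound $\fint_Q|\cK_\eps^{\mathrm{tf}}(f,g)-(\cK_\eps^{\mathrm{tf}}(f,g))_Q|\lesssim\|f\|_\infty\|g\|_\infty$ over cubes $Q$, split into the near ($|w|\lesssim\ell(Q)$) and far parts. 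Granting this estimate, $|\langle Dv_t,\cK_\eps(f_t,f_t)\rangle|\le C_d\|Dv_t\|_{h^1}$ by $h^1$--$\mathrm{bmo}$ duality, so $\|f_T\|_{\cB(\eps)}-\|f_0\|_{\cB(\eps)}=\int_0^T\tfrac{d}{dt}\|f_t\|_{\cB(\eps)}\,dt\le C_d\int_0^T\|Dv_t\|_{h^1}\,dt$ uniformly in $\eps$, and taking the supremum over $\eps$ finishes the proof.
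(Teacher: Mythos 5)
Your first paragraph gives exactly the paper's (terse) argument for the corollary: apply Theorem~\ref{bressan-bianch} with $A=\Om_L$, combine the Observation $\|\bbone_{\Phi_T(\Om_L)}\|_{\cB}\gtrsim\kappa\log(1/\eps)$ with $\|\bbone_{\Om_L}\|_{\cB}\lesssim 1$, and rearrange. The remainder of your proposal sketches a proof of Theorem~\ref{bressan-bianch} itself, which is not needed here since that theorem is established separately in the paper (via Proposition~\ref{bianchprop}, reduction to the trilinear form of Theorem~\ref{mainsingularintegral}, the David--Journ\'e $T(1)$ theorem, and atomic decompositions, rather than the $L^\infty\times L^\infty\to\mathrm{bmo}$ duality route you outline and leave unproved).
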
 

A weaker form of Theorem \ref{bressan-bianch}, with $D v(\cdot,t)\in L^p$, $p>1$,  was cited  in 
\cite[eq.(1.5)]{sss} with reference to the current project, and served as initial motivation for the harmonic analysis results of that paper. 
 Flavien  L\'eger \cite{leger} independently found a related approach to mixing which leads to a limiting version of  the singular 
integral forms in \eqref{fSepsRdef} below. Instead of the change of the Bianchini norm of characteristic functions he considers the change of the  square of a logarithmic
$L^2$-Sobolev norm of an arbitrary passive scalar advected under a divergence free vector field. For more comments about this see  \S\ref{legersection} below.

\subsection*{\it This paper}
A computation reducing the problem to an inequality for bilinear singular integral operators is given in \S\ref{SIreductionmain}. In \S\ref{cj-reduction} we recall  
 the connection with 
Christ-Journ\'e operators. In  \S\ref{mainresults} we describe  the natural decomposition of our singular integral form and state the two main propositions \ref{mainfirst} and \ref{easy} which lead to  $h^1\to L^1$ boundedness. These propositions 
are proved in \S\ref{proofofmainfirst} and \S\ref{proofofeasy}. In
\S\ref{additional} we make additional remarks about the approach by Crippa and De Lellis and the results by L\'eger. 
In \S\ref{nonL1} we prove a result concerning the (non)-feasability of  the singular integral estimate for Bressan's $L^1$ conjecture and formulate a related discrete problem. 
Finally, in \S\ref{toymodel} we include  some positive results on  a toy model for the $L^1$ version of Bressan's conjecture.

\

\noindent{\it Acknowledgement.} We thank the referees for their suggestions.

\section{The reduction to singular integrals} \label{SIreduction}
\subsection{\it The main computation}\label{SIreductionmain}
Given $A\subset \bbT^d$ we define
$$f_A(x)=
\bbone
_{A}(x)-\bbone_{A^\compl}(x).$$
Since constants (and thus
$\bbone_{A}+\bbone_{A^\compl}$) have semi-norm equal to $0$ in $\cB(\eps)$ we have
$\|\bbone_A\|_\cB=\|\bbone_{A^\complement}\|_\cB$ and thus
\Be\label{1AvsfA}
\|\bbone_A\|_{\cB(\eps)}=\frac 12 \|f_A\|_{\cB(\eps)}\,.
\Ee

For a  periodic time independent vector field $ b$
and  functions $f$,  $g$ on $\bbT^d$ we define
\Be\label{fSepsdef}
\fS_{\eps}^\per[f,g,b]=\iint_{\substack{ (x,y)\in \bbT^d\times\bbT^d\\\eps\le |x-y|\le 1/4} }\frac{\inn{x-y}{b(x)-b(y)}}{|x-y|^{d+2}} g(y) f(x)\,dy\,dx.
\Ee

\begin{prop}\label{bianchprop}
Let $v$, $\phi$ be as in the introduction.
Then
$$\big\|\bbone_{\Phi_T(A)}\big\|_{\cB_\eps}-
\big\|\bbone_{A}\big\|_{\cB_\eps} = \frac{1}{2V_d}\int_0^T 
\fS_\eps^\per\big[f_{\Phi_t(A)}, f_{\Phi_t(A)}, v(\cdot, t)\big]\,dt
$$
where $V_d$ denotes the volume of the unit ball in $\bbR^d$.
\end{prop}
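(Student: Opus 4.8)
The plan is to reduce the statement to a pointwise identity that exploits the fact that $f_A$ is $\pm1$-valued, and then to differentiate in $t$ after first integrating out the scale variable $r$; the last step converts the sharp cutoff in the definition of $\|\cdot\|_{\cB(\eps)}$ into a Lipschitz kernel, which is what makes the $t$-differentiation painless. By \eqref{1AvsfA} it suffices to track $\tfrac12\|f_{\Phi_t(A)}\|_{\cB(\eps)}$.

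First I would record a closed form for $\|f\|_{\cB(\eps)}$ when $|f|\equiv1$. In that case $\sgn\big(f(x)-\intslash_{B_r(x)}f(y)\,dy\big)=f(x)$ (checking the two cases $f(x)=\pm1$, using $|\intslash f|\le1$), so
$$\Big|f(x)-\intslash_{B_r(x)}f(y)\,dy\Big|\;=\;f(x)\Big(f(x)-\intslash_{B_r(x)}f(y)\,dy\Big)\;=\;1-\intslash_{B_r(x)}f(x)f(y)\,dy.$$
Writing the average as $\tfrac1{V_dr^d}\int_{B_r(x)}$, integrating in $x$, and using Tonelli to exchange the $x$- and $r$-integrations gives, for every $\pm1$-valued $f$,
$$\|f\|_{\cB(\eps)}=\log\tfrac1{4\eps}-\iint_{\bbT^d\times\bbT^d}f(x)f(y)\,G\big(\dist(x,y)\big)\,dx\,dy,\qquad G(s):=\int_{\max(\eps,s)}^{1/4}\frac{dr}{V_d\,r^{d+1}}$$
(with $G(s)=0$ for $s\ge1/4$), where $\dist$ denotes geodesic distance on $\bbT^d$. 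Applying this to $f=f_{\Phi_t(A)}=f_A\circ\Phi_t^{-1}$ and substituting $x=\Phi_t(u)$, $y=\Phi_t(w)$, legitimate since $\Phi_t$ is volume preserving, turns it into
$$\|f_{\Phi_t(A)}\|_{\cB(\eps)}=\log\tfrac1{4\eps}-\iint_{\bbT^d\times\bbT^d}f_A(u)f_A(w)\,G\big(\rho_t(u,w)\big)\,du\,dw,\qquad\rho_t(u,w):=\dist\big(\Phi_t(u),\Phi_t(w)\big).$$

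Next I would differentiate in $t$. The kernel $G$ is Lipschitz, with $G'(s)=-V_d^{-1}s^{-d-1}$ on $(\eps,1/4)$ and $G'\equiv0$ off $[\eps,1/4]$. Since $v$ is smooth, $t\mapsto\Phi_t$ is Lipschitz into $C^1(\bbT^d)$, so for fixed $(u,w)$ with $u\neq w$ the maps $t\mapsto\rho_t(u,w)$ and $t\mapsto G(\rho_t(u,w))$ are Lipschitz. On the range $\rho_t(u,w)\in(\eps,1/4)\subset(0,1/2)$ the length-$\rho_t$ representative $z$ of $\Phi_t(u)-\Phi_t(w)$ is uniquely determined and locally smooth in $t$, with $\dot z=v(\Phi_t(u),t)-v(\Phi_t(w),t)$ and $\partial_t|z|=\langle z,\dot z\rangle/|z|$; hence the chain rule gives, for a.e. $t$,
$$\frac{d}{dt}G(\rho_t(u,w))=-\frac1{V_d}\,\frac{\inn{\Phi_t(u)-\Phi_t(w)}{v(\Phi_t(u),t)-v(\Phi_t(w),t)}}{|\Phi_t(u)-\Phi_t(w)|^{d+2}}$$
on $\{\eps<\rho_t(u,w)<1/4\}$ and $0$ elsewhere. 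This is bounded by $2V_d^{-1}\eps^{-d-1}\sup|v|$ on the finite-measure set $[0,T]\times\bbT^d\times\bbT^d$, so the fundamental theorem of calculus in $t$ together with Fubini yields
$$\|f_{\Phi_T(A)}\|_{\cB(\eps)}-\|f_A\|_{\cB(\eps)}=\frac1{V_d}\int_0^T\iint_{\substack{(u,w)\in\bbT^d\times\bbT^d\\\eps\le\rho_t(u,w)\le1/4}}f_A(u)f_A(w)\,\frac{\inn{\Phi_t(u)-\Phi_t(w)}{v(\Phi_t(u),t)-v(\Phi_t(w),t)}}{|\Phi_t(u)-\Phi_t(w)|^{d+2}}\,du\,dw\,dt.$$
Then, for each fixed $t$, substituting $x=\Phi_t(u)$, $y=\Phi_t(w)$ back (again volume preserving, with $\rho_t(u,w)=|x-y|$ and $f_A(\Phi_t^{-1}(x))=f_{\Phi_t(A)}(x)$), the inner integral becomes exactly $\fS_\eps^\per[f_{\Phi_t(A)},f_{\Phi_t(A)},v(\cdot,t)]$ by \eqref{fSepsdef}; combined with $\|\bbone_{\Phi_T(A)}\|_{\cB_\eps}-\|\bbone_A\|_{\cB_\eps}=\tfrac12\big(\|f_{\Phi_T(A)}\|_{\cB(\eps)}-\|f_A\|_{\cB(\eps)}\big)$ this is the asserted identity with constant $\tfrac1{2V_d}$.

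The computation is otherwise routine; the only genuinely delicate point is the $t$-differentiation in the presence of the torus geometry. Carrying out the $r$-integration first, as above, is precisely the device that avoids any coarea or surface-measure argument: it replaces the discontinuous cutoff $\bbone_{\{\dist(\Phi_t(u),\Phi_t(w))\le r\}}$ by the Lipschitz kernel $G\circ\rho_t$, so that the interchange of $\tfrac{d}{dt}$ with the spatial integral is justified by the crude bound above. One still has to verify that the length-minimizing lift of $\Phi_t(u)-\Phi_t(w)$ is well defined and smooth in $t$ on the range $\rho_t\in(\eps,1/4)$ that actually contributes — and that the exceptional set where $\rho_t(u,w)\in\{\eps,1/4\}$ or $u=w$ is negligible — but these are harmless bookkeeping points.
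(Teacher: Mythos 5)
Your proof is correct and follows essentially the same route as the paper: you remove the absolute value using $|f_A|\equiv 1$, integrate out $r$ first to replace the sharp cutoff by the Lipschitz kernel $G$ (which is exactly the paper's $H_\eps$ in radial form), differentiate in $t$ via the chain rule, and change variables using incompressibility. The cosmetic difference is that you write a fully symmetric closed form $\|f\|_{\cB(\eps)}=\log\tfrac1{4\eps}-\iint f(x)f(y)G(\dist(x,y))$ and cancel the constant at the start, whereas the paper cancels the constant implicitly when subtracting the two Bianchini expressions, but the structure of the argument is the same; your explicit attention to the well-definedness of the length-minimizing lift on $\bbT^d$ is a small detail the paper elides.
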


\begin{proof}
We compute using the incompressibility of the flow,
$$
\begin{aligned}
&\|f_A\circ\Phi_T^{-1}\|_{\cB(\eps)}-
\|f_A\|_{\cB(\eps)}
\\&=\,
\int_{\eps}^{1/4}\Big[ \int_{\bbT^d}
\big|f_A(\Phi_T^{-1}(x))- \intslash_{B_r(x)}
f_A(\Phi_T^{-1}(y)) dy \big| \,dx\,
\\&\quad\qquad\qquad\qquad\,-\,
\int \big|f_A(x)- \intslash_{B_r(x)} f_A(y) dy \big| \,dx\Big]\, \frac{dr}{r}
\\
&=\,
\int_{\eps}^{1/4}\Big[ \int_{\bbT^d}
\big|f_A(z)- \intslash_{\Phi_T^{-1}B_r(\Phi_T(z))}
f_A(w) dw \big| \,dz\,
\\&\quad\qquad\qquad\qquad\,-\,
\int \big|f_A(x)- \intslash_{B_r(x)} f_A(y) dy \big| \,dx\Big]\, \frac{dr}{r}\,.
\end{aligned}
$$
Now $f_A(x)=1$ for $x\in A$ and $f_A(x)=-1$ for $x\in A^\compl$. 
Thus from the above, as we use that $- |E|\le \int_E f_A(y)dy\le |E|$ for all measurable sets $E$,   we obtain
$$
\begin{aligned}
&\|f_A\circ\Phi_T^{-1}\|_{\cB(\eps)}-
\|f_A\|_{\cB(\eps)}\,=\,
\\
&\qquad
\int_{\eps}^{1/4}\Big\{ \int_{A}
\Big[f_A(x)- \intslash_{\Phi_T^{-1}B_r(\Phi_T(x))}
f_A(y) dy\Big]  \,dx
\\ & \qquad \,-\,
\int_A\Big  [f_A(x)- \intslash_{B_r(x)} f_A(y) dy\Big ]
\,dx
\\& \qquad \,+\,
\int_{A^\compl} \Big [
\intslash_{\Phi_T^{-1}B_r(\Phi_T(x))}
f_A(y) dy -f_A(x)\Big]\,dx
\\ & \qquad \,-\,
\int_{A^\compl} \Big [\intslash_{B_r(x)} f_A(y) dy
- f_A(x) \Big ]
\,dx \,\Big\}\,
 \frac{dr}{r}
\end{aligned}
$$
and this implies
\begin{align}
&\|f_A\circ\Phi_T^{-1}\|_{\cB(\eps)}-
\|f_A\|_{\cB(\eps)} \notag
\\
&=\,
\int_{\eps}^{1/4}
\int f_A(x) \Big[
 \intslash_{B_r(x)} f_A(y) dy
-\intslash_{\Phi_T^{-1}B_r(\Phi_T(x))}
f_A(y) dy
 \Big]\,dx\,\frac{dr}{r} \notag
\\&=\,-
\int f_A(x) \,\int_0^T \frac{d}{dt}\Big[
\int_{\eps}^{1/4}\intslash_{\Phi_t^{-1}B_r(\Phi_t(x))}
f_A(y) dy \,\frac{dr}{r}\Big]
\, dt\, dx
\label{derivative}
\end{align}
Now let $V_d$ denote the measure  of the unit ball in $\bbR^d$. Then
$$
\begin{aligned}
&\int_{\eps}^{1/4}\intslash_{\Phi_t^{-1}B_r(\Phi_t(x))}
f_A(y) dy \,\frac{dr}{r}
\\
&=V_d^{-1}\int_{\eps}^{1/4}r^{-d-1}\int_{\{y: |\Phi_t(x)-\Phi_t(y)|\le r\}}
f_A(y) dy \,dr
\\&=
\int H_\eps(\Phi_t(x)-\Phi_t(y)) f_A(y) \,dy
\end{aligned}
$$
where
$$
H_\eps(u)=\begin{cases}
d^{-1}V_d^{-1} (\eps^{-d}-(1/4)^{-d}) &\text{ if } |u|\le \eps
\\
d^{-1}V_d^{-1} (|u|^{-d}-(1/4)^{-d}) &\text{ if } \eps<|u|\le 1/4
\\
0 &\text{ if } |u|> 1/4
\end{cases}
$$
 $H_\eps$ is a Lipschitz function, and  has a bounded gradient given by
$$
\nabla H_\eps(u)= -V_d^{-1}\frac{u}{|u|^{d+2}}\bbone_{\cA(\eps,1/4)}(u)
$$
where $\cA(\eps,1/4)(u)=\{u\in \bbR^d: \eps\le |u|\le 1/4\}$. Thus
$$
\begin{aligned}
&\frac{d}{dt}\Big[
\int_{\eps}^{1/4}\intslash_{\Phi_t^{-1}B_r(\Phi_t(x))}
f_A(y) dy \,\frac{dr}{r}\Big]
\\=&\,
\int \biginn{\tfrac{d}{dt}(\Phi_t(x)-\Phi_t(y))
}{\nabla H_\eps (\Phi_t(x)-\Phi_t(y))} f_A(y) \, dy
\\=&\,-
\int\limits_{\eps\le |\Phi_t(x)-\Phi_t(y)|
\le \frac 14}
f_A(y)
\frac{
\inn{v(\Phi_t(x),t)- v(\Phi_t(y),t)}{ \Phi_t(x)-\Phi_t(y) }}
{V_d|\Phi_t(x)-\Phi_t(y)|^{d+2}}\,dy.
\end{aligned}
$$
Using this in \eqref{derivative} and changing variables we obtain
$$
\begin{aligned}
&\|f_{\Phi_T (A)}\|_{\cB(\eps)}-
\|f_A\|_{\cB(\eps)} \notag
\\
&=\int_0^T\iint\limits_{\substack{ \eps\le |x-y|\le \frac 14}}
f_{\Phi_t(A)}(x)f_{\Phi_t(A)}(y)
\frac{
\inn{v(x,t)- v(y,t)}{ x-y} }
{V_d|x-y|^{d+2}}\,dy\,dx\, dt
\end{aligned} 
$$ which gives the assertion.
\end{proof}

In order to complete the proof of  Theorem \ref{bressan-bianch} it suffices to prove, for divergence free vector fields $b$, the inequality
\Be \label{ineqcharfctTd}
\big|  \fS_\eps^\per[\bbone_A, \bbone_B,b] \big| \lc \| D b\|_{h^1(\bbT)} 
\Ee
for measurable subsets $A, B\subset \bbT^d$
and apply Proposition \ref{bianchprop}.
Without loss of generality (after localization) one can assume that the diameters of $A$ and $B$ are  small. 
We can then 
 transfer the problem to $\bbR^d$ and look at the analogous singular integral form
  on $\bbR^d$,
defined by 
  \Be\label{fSepsRdef}
\fS_{\eps,R}[f, g, b]\,=\,
\iint\limits_{\eps\le |x-y|\le R} \frac{\inn{x-y}{b(x)-b(y)}}{|x-y|^{d+2}} g(y) f(x)\, dy\,
dx.
\Ee
Now   \eqref{ineqcharfctTd} 
follows from  
\begin{theorem}\label{mainsingularintegral} 
(i) For $\eps<R$,
\Be\label{trilinearformestepsR}
\big| \fS_{\eps,R}[f,g,b]\big|\le C_d \|Db\|_{H^1(\bbR^d)} \|g\|_\infty\|f\|_\infty
\Ee
with $C_d$ independent of $\eps, R$.

(ii) If in addition $R<1$ the Hardy space $H^1$ may be replaced in \eqref{trilinearformestepsR}
with the local Hardy space $h^1$.
\end{theorem}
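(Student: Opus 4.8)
The plan is to dualize in $f$. Writing
$$\fS_{\eps,R}[f,g,b]=\int_{\bbR^d}f(x)\,\Xi[g,b](x)\,dx,\qquad \Xi[g,b](x)=\int_{\eps\le|x-y|\le R}\frac{\inn{x-y}{b(x)-b(y)}}{|x-y|^{d+2}}\,g(y)\,dy,$$
and taking the supremum over $\Norm{f}_\infty\le1$, the estimate in Theorem \ref{mainsingularintegral} is \emph{equivalent} to $\Norm{\Xi[g,b]}_{L^1(\bbR^d)}\le C_d\Norm{Db}_{H^1}\Norm{g}_\infty$ (and, in (ii), with $h^1$ in place of $H^1$ when $R<1$). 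We may take $b$ smooth with compact support and pass to the limit at the end; the truncated kernel keeps all integrals absolutely convergent. The fundamental theorem of calculus, $b(x)-b(y)=\int_0^1 Db((1-\sigma)y+\sigma x)(x-y)\,d\sigma$, gives, with $\nu=(x-y)/|x-y|$,
$$\Xi[g,b](x)=\sum_{i,j}\int_{\eps\le|x-y|\le R}\frac{\nu_i\nu_j}{|x-y|^{d}}\,g(y)\Big(\fint_0^1 \partial_j b_i\big((1-\sigma)y+\sigma x\big)\,d\sigma\Big)\,dy .$$
Since $\operatorname{div}b\equiv0$ one has the pointwise identity $\sum_j\fint_0^1\partial_jb_j(\cdots)\,d\sigma=0$, so we may subtract the trace and replace $\nu_i\nu_j/|x-y|^d$ by $\tilde K_{ij}(x-y)$, where $\tilde K_{ij}(u)=\frac{u_iu_j}{|u|^{d+2}}-\frac{\delta_{ij}}{d\,|u|^d}$ is a genuine Calder\'on--Zygmund kernel with vanishing mean on spheres. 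This cancellation is essential: for the trace part one would be left with $\int_{\eps\le|u|\le R}|u|^{-d}\,du\sim\log(R/\eps)$. Thus $\Xi[g,b]$ is a finite sum of truncated Christ--Journ\'e operators, $\Xi[g,b]=\sum_{i,j}T_{\partial_jb_i}^{(i,j)}g$ with $T^{(i,j)}_a h(x)=\int_{\eps\le|x-y|\le R}\tilde K_{ij}(x-y)\big(\fint_0^1 a((1-\sigma)y+\sigma x)\,d\sigma\big)h(y)\,dy$, and the theorem is reduced to the bilinear endpoint bound $\Norm{T_a g}_{L^1}\le C_d\Norm{a}_{H^1}\Norm{g}_\infty$, uniformly in $\eps<R$ (with $h^1$ if moreover $R<1$), for operators $T_a$ of this Christ--Journ\'e type.

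\textbf{Linearization in the symbol and the local part.} By the atomic decomposition of $H^1$ and the triangle inequality (for $b$ smooth and compactly supported, then density), it suffices to prove $\Norm{T_a g}_{L^1}\le C_d\Norm{g}_\infty$ uniformly over all $H^1$-atoms $a$, i.e.\ $a$ supported in a ball $B=B(z_0,r)$ with $\Norm{a}_\infty\le|B|^{-1}$ and $\int a=0$. Split $\Norm{T_ag}_{L^1}=\int_{10B}|T_ag|\,dx+\int_{(10B)^c}|T_ag|\,dx$. The local piece is routine: write $g=g\bbone_{20B}+g\bbone_{(20B)^c}$; the first term is handled by Cauchy--Schwarz together with the $L^2$ bound $\Norm{T_a}_{L^2\to L^2}\ls\Norm{a}_\infty$ (the Christ--Journ\'e theorem, uniform in the truncation, recalled in \S\ref{cj-reduction}), and for the second one uses that for $x\in10B$, $y\notin20B$ the segment $[x,y]$ meets $B$ in a fraction $\ls r/|x-y|$ of its length, which renders the remaining kernel absolutely integrable. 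This is the easy half of the decomposition.

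\textbf{The off-diagonal estimate (the crux).} It remains to prove $\int_{(10B)^c}|T_ag|\,dx\le C_d\Norm{g}_\infty$; this should be Proposition \ref{mainfirst}. I would decompose the Christ--Journ\'e kernel $\tilde K_{ij}(x-y)\bbone_{[\eps,R]}(|x-y|)\,\fint_0^1 a((1-\sigma)y+\sigma x)\,d\sigma$ into dyadic blocks in $|x-y|$; on each block the symbol-average is supported only in the ``shadow'' cone of $B$ as seen from $x$ and is dominated by $\Norm{a}_\infty$ times the fraction of the segment lying inside $B$. The obstacle is that the crude size bound, even with this cone restriction, only yields $O(\log^2(R/r))$ after integration in $x$; so both cancellations must be made to cooperate: the vanishing spherical mean of $\tilde K_{ij}$ (equivalently, Calder\'on--Zygmund smoothness at the scale $|x-y|$) \emph{and} the condition $\int a=0$. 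The latter cannot be applied along a single segment through $B$ (a function with $\int_{\bbR^d}a=0$ need not integrate to zero along a line), so one must organize the $y$-integration into families of parallel lines and apply Fubini in the direction transverse to $[x,y]$, where $\int a=0$ finally enters; combined with the standard kernel cancellation this should produce a small power gain in $r/|x-z_0|$ that beats the logarithms, is summable over dyadic scales, and is integrable over $(10B)^c$ uniformly in $\eps,R$. I expect quantifying this interplay of the two cancellations — uniformly in the truncations — to be the heart of the proof.

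\textbf{The local Hardy space version.} For part (ii), if $R<1$ the kernel of $T_a$ is supported in $\{|x-y|\le R\}$, so $T_a$ is local at unit scale. For the ``large'' $h^1$-atoms (supported in a ball $B$ with $|B|\ge1$, with $\Norm{a}_\infty\le|B|^{-1}$ but no moment condition) locality confines $T_ag$ to a fixed neighbourhood of $B$ and the $L^2$ bound alone gives $\Norm{T_ag}_{L^1}\ls\Norm{g}_\infty$; the small atoms are treated exactly as above. Hence the bilinear bound holds with $\Norm{a}_{h^1}$ on the right, and the duality reduction of the first paragraph yields (ii). (The $L^p$, $p>1$, analogue of all of this, used in \cite{sss}, is the same argument with the Christ--Journ\'e $L^p$ theory replacing the atomic estimate.)
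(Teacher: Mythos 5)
Your reduction to the bilinear $d$-commutator form is essentially the paper's own \S\ref{cj-reduction} (the trace-subtraction using $\operatorname{div} b=0$ reproduces \eqref{KiKij}--\eqref{CJdecomp}), and your treatment of the local piece near the atom's cube and of the $h^1$ large-atom case are both correct. But the off-diagonal estimate — which you rightly flag as ``the crux'' — is not a proof: you correctly compute that the naive shadow-cone bound gives $O(\log^2(R/r))$ and that both the spherical cancellation of $\tilde K_{ij}$ and $\int a=0$ must interact, but the proposed ``Fubini in the direction transverse to $[x,y]$'' is a heuristic with no quantitative mechanism. Indeed a fixed-$x$ slice of $m_{x,y}[a]$ over a dyadic annulus in $y$ does not inherit any vanishing moment from $\int a=0$, so it is not clear how your organization of the $y$-integral extracts the needed small power of $r/|x-z_0|$. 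The paper itself remarks (end of \S\ref{cj-reduction}) that no suitable $H^1\to L^1$ $d$-commutator bound is available off the shelf, and then deliberately takes a different route to manufacture one.

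That route is the content of Propositions~\ref{mainfirst} and~\ref{easy}, and it avoids atom-by-atom analysis of the Christ--Journ\'e operator entirely. At each dyadic scale $k$ the symbol $b$ is split as $\phi_k*b+\sum_{n\ge1}\psi_{k+n}*b$ (mollification at the kernel scale plus high-frequency residuals). For the matched piece $\sum_k S_k[g,\phi_k*b]$, the regularization turns the Schwartz kernel into a \emph{bona fide} standard kernel with size and gradient bounds, and the David--Journ\'e $T1$ theorem (not Christ--Journ\'e) applies to the resulting linear operator $T_{[g]}$ acting on $h=Db$; the $H^1\to L^1$ bound is then the routine CZ atomic estimate for that operator, which is precisely the off-diagonal estimate you could not close. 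For the residuals $\sum_k S_k[g,\psi_{k+n}*b]$ the commutator identity \eqref{comm}, an integration by parts $\psi_{k+n}*b=2^{-k-n}\sum_j\Psi^{[j]}_{k+n}*\partial_jb$, Khinchine's inequality for $\sum_k\gamma_k\cK_{i,k}$, and a three-regime atom estimate produce the decay $n2^{-n}$ that makes the sum in $n$ converge. In short, your decomposition is the right target but the key quantitative step is missing; the paper replaces it with a symbol regularization that converts the problem into a $T1$-accessible one.
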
 



\subsubsection*{\it Remark}
An examination of the proof of Theorem \ref{mainsingularintegral} also 
shows that for  $f,g\in L^\infty$, $Db\in H^1$,
\Be \label{pv}\lim_{\substack {\eps\to 0\\R\to\infty}}\fS_{\eps,R}[f, g,b] = \fS[f, g,b]\Ee
where $\fS$ is  a singular integral form satisfying
\Be\label{trilinearformest}
\big| \fS[f,g,b]\big|\le C_d \|Db\|_{H^1} \|g\|_\infty\|f\|_\infty.
\Ee

\subsection{\it Connection with  Christ-Journ\'e operators }\label{cj-reduction}
There is a close relation  with the operators considered by Christ and Journ\'e \cite{cj}, and in more generality by three of the authors \cite{sss}. The result of Proposition \ref{bianchprop} is cited in \cite{sss} and served as a motivation for the harmonic analysis results of that paper.

For $\beta\in L^1_\loc$ we can define for almost every pair $(x,y)\in \bbR^d\times \bbR^d$   
\Be\label{mxy} m_{x,y}[\beta]= \int_0^1 \beta(sx+(1-s)y) \,ds,\,\Ee
the mean of $\beta$ over the line segment connecting the points $x$ and $y$.
Given a Calder\'on-Zygmund convolution kernel $K$ in $\bbR^d$, $d\ge 2$,  
and $a\in L^\infty(\bbR^d)$ (or $L^q(\bbR^d)$)  the so called
$d$-commutator   of first order 
 is defined by 
\Be\cC_{K}[f,\beta](x)=  \int_{\bbR^d} K(x-y)  m_{x,y}[\beta] \, f(y)\,dy.\Ee
For a divergence free vector field $b$  set 
\Be\label{alphaij}\beta_{ij}= \frac{\partial b_i}{\partial x_j}\Ee
so that
$$b_i(x)-b_i(y)= \sum_{j=1}^d(x_j-y_j)m_{x,y}[\beta_{ij}].
$$
By the assumption
$\text{div} (b)=0$ we have   $\beta_{dd}(x)=-\sum_{i=1}^{d-1} \beta_{ii}(x)$; hence
\Be \label{KiKij}
\frac{\inn{x-y}{b(x)-b(y)}}{|x-y|^{d+2}} =
\sum_{i=1}^{d-1} K_i(x-y) m_{x,y}[\beta_{ii}]+
\sum_{\substack{1\le i,j\le d\\i\neq j}}
K_{ij}(x)m_{x,y} [\beta_{ij}]
\Ee
where 
\begin{subequations}
\Be\label{Ki}
K_i(x)= \frac{(x_i-y_i)^2-(x_d-y_d)^2}{|x-y|^{d+2}}\Ee
and
\Be\label{Kij}K_{ij}(x)= \frac{(x_i-y_i)(x_j-y_j)}{|x-y|^{d+2}}\,.\Ee
\end{subequations} Consequently,
\Be \label{CJdecomp}
\fS[f,g,b]= \sum_{i=1}^{d-1} \int \cC_{K_i}[g,\beta_{ii}](x) f(x) \,dx+
\sum_{i\neq j} \int \cC_{K_{ij}}[g, \beta_{ij}](x) f(x)\, dx\,.
\Ee
This  identity turns our problem  into a problem on $d$-commutators. Note that $K_i(x)$ and $K_{ij}(x)$ above are of the form
$ \Omega(x/|x|)|x|^{-d}$ 
where $\Omega\in C^\infty(S^{d-1})$ is even  with
$\int_{S^{d-1}}\Omega(\theta) d\theta=0.$
From \eqref{CJdecomp} and the results in \cite{sss} one obtains
\Be \label{LpforS}
|\fS[f,g,b]|\le C(p_1, p_2,p_3) \|f\|_{p_1} \|g\|_{p_2}\|Db\|_{p_3}
\Ee
for $p_1^{-1}+p_2^{-1}+p_3^{-1}=1$, with $1<p_i\le \infty$. S.   Hofmann suggested in personal communication  that this result 
might  also follow from (the isotropic version) of  his off-diagonal $T(1)$ theorem  in
 \cite{hofmann-off-diagonal}.
These  results do not seem to give enough information in the  case $p_3=1$ which is relevant for the focus of this paper.
The weak type $(1,1)$ result in \cite{seeger-rev} can be modified to see that for $g\in L^\infty$ and $\beta\in L^1$ we have
$\cC_K[g,\beta]\in L^{1,\infty}$ and this can be used to prove a bound for compactly supported $b$ with $Db\in L\log L$; however there does not seem to be an  $H^1\to L^1$ result for $d$-commutators 
which can be used to establish 
Theorem \ref{mainsingularintegral}.
 Our approach will  be more direct; we  rely on
some regularizations for the kernels, and use 
 the original $T(1)$ theorem by  David and Journ\'e for one of the terms and Littlewood-Paley estimates for the others. The  atomic decomposition will be used for the Hardy space estimates.

  \subsection{\it Further reductions}\label{mainresults}
  We now begin with the proof of Theorem \ref{mainsingularintegral} and first make an easy 
  observation about single scale contributions.  Using 
$$ b(x)- b(y)= \int_0^1 D b (sx+(1-s)y) ds \, (x-y)$$ we observe,
using a straightforward application of H\"older's inequality, 
 that for  each $R>0$
\Be\label{singleannulus}
\iint_{R\le |x-y|\le 2R} \frac{\big|\inn{x-y}{ b(x)- b(y)}|}{|x-y|^{d+2}}  |g(y)| |h(x) |dy\, dx 
\lc  \|Db\|_{p_1}\|g\|_{p_2} \|h\|_{p_3}, 
\Ee
for $p_1^{-1}+p_2^{-1} + p_3^{-1}=1$,  $1\le p_1,p_2, p_3\le \infty$.

Let $\chi$ be a {\it radial} $C^\infty$ function supported in $\{x:1/2< |x|<2\}$ such that
$\sum_{k\in \bbZ} \chi(2^k x)=1$ for $x\neq 0$.
Define 
\[\chi_k(x)= \chi(2^kx)\] and 
set 
\Be\label{Skdef}S_k[g,  b](x)\,=\,
\int \chi_k(x-y)\frac{\inn{x-y}{ b(x)- b(y)}}{|x-y|^{d+2}} g(y) dy\,.
\Ee
Using  \eqref{singleannulus} 
it  is  easy to see that Theorem \ref{mainsingularintegral}  follows from
\begin{theorem}\label{sumhSk}
\Be \label{ksum}
\Big|\sum_{k\in \cZ} \int h(x) S_k [g,  b](x) dx\Big | \le C
\|Db\|_{H^1}\|g\|_\infty \|h\|_\infty
\Ee
where the summation over $k$ is over a {\it finite} set $\cZ$ of integers and the constant $C$ does not depend on the cardinality of this set.  
\end{theorem}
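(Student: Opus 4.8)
The plan is to prove \eqref{ksum} by splitting the sum $\sum_k S_k[g,b]$ according to the frequency interaction between the Littlewood--Paley piece carrying scale $2^{-k}$ and the frequency content of $Db$. First I would record that each $S_k[g,b]$ can be rewritten in terms of $Db$ rather than $b$ itself: using $b(x)-b(y)=\int_0^1 Db(sx+(1-s)y)\,ds\,(x-y)$, the kernel of $S_k$ becomes $\chi_k(x-y)\,|x-y|^{-d}\,\langle\theta, m_{x,y}[Db]\theta\rangle$ with $\theta=(x-y)/|x-y|$, i.e.\ a smooth Calder\'on--Zygmund kernel at scale $2^{-k}$ paired against the line-average of the matrix-valued function $Db$; this is exactly the $d$-commutator structure of \S\ref{cj-reduction}. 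The point of the reformulation is that now everything is linear in $Db$, so the $H^1$ norm can be attacked via the atomic decomposition: write $Db=\sum_\nu \lambda_\nu a_\nu$ with atoms $a_\nu$ supported on cubes $Q_\nu$, $\|a_\nu\|_\infty \le |Q_\nu|^{-1}$, $\int a_\nu=0$, and $\sum|\lambda_\nu|\lesssim \|Db\|_{H^1}$. By linearity and the finiteness of $\cZ$, it suffices to bound $\big|\sum_{k\in\cZ}\int h\, S_k[g,a](x)\,dx\big| \lesssim \|g\|_\infty\|h\|_\infty$ uniformly over a single atom $a$ (normalized so $\lambda=1$), with constant independent of $\cZ$ and of the atom.

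For a fixed atom $a$ supported on a cube $Q$ of side $\ell$, I would split the sum over $k$ into the ``fine'' scales $2^{-k}\le \ell$ and the ``coarse'' scales $2^{-k}>\ell$. For the coarse scales, the cancellation $\int a=0$ is the resource: in $S_k[g,a]$ the line-average $m_{x,y}[a]$ can be exploited together with the smoothness of the kernel at scale $2^{-k}\gg \ell$ to gain a factor $(\ell 2^k)^{?}$ — more precisely one moves a derivative onto $\chi_k(x-y)|x-y|^{-d}\langle\theta,\cdot\,\theta\rangle$ and integrates by parts along the segment, producing a gain of roughly $\ell 2^k$ per unit, so that $\sum_{2^{-k}>\ell}$ converges geometrically and is controlled by $\|a\|_1\,\|g\|_\infty\|h\|_\infty \lesssim 1$. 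For the fine scales $2^{-k}\le\ell$, cancellation in $a$ is useless; instead one uses that $a$ is bounded by $|Q|^{-1}=\ell^{-d}$ and that $S_k$ is a nice averaging operator at scale $2^{-k}$, so $\|S_k[g,a]\|_\infty \lesssim \|g\|_\infty \|a\|_\infty \cdot (\text{something summable in }k)$; here one needs to use the structure more carefully — this is where the $T(1)$ theorem of David--Journ\'e enters for the ``diagonal'' terms $K_i$ (which are not automatically antisymmetric), while the off-diagonal $K_{ij}$ terms, being genuinely of convolution type up to the $m_{x,y}$ average, yield to Littlewood--Paley/Cotlar almost-orthogonality after one splits $m_{x,y}[\beta]=\beta((x+y)/2)+(\text{remainder with extra smoothing})$.

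The main obstacle I expect is the fine-scale regime, i.e.\ proving that $\sum_{2^{-k}\le\ell}\int h\,S_k[g,a]$ stays bounded without any cancellation from the atom. Summing $\|S_k[g,a]\|_\infty$ crudely over all $k$ with $2^{-k}\le\ell$ would diverge logarithmically (there are $\sim\log(1/\ell)$ such scales if the scales go all the way down, but here they are bounded below by the original $\eps$, giving exactly the $\log(1/\eps)$ one is trying to \emph{avoid} absorbing). So the fine-scale piece must itself be handled by an orthogonality/$T(1)$ argument that sees cancellation in $g$ or in the kernel sum $\sum_k$, not in $a$: one writes $\sum_k S_k[g,\cdot]$ as (a perturbation of) a genuine Calder\'on--Zygmund operator applied to $g$, paired with $h$, with operator norm $O(\|a\|_\infty \ell^d)=O(1)$ for the normalized atom — this is precisely the content that the authors package as Propositions~\ref{mainfirst} and~\ref{easy}. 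Concretely I would: (1) isolate the term where $m_{x,y}[\beta_{ii}]$ is replaced by its midpoint value, handle it by David--Journ\'e after checking $T(1),T^*(1)\in BMO$ and weak boundedness (using the oddness/evenness of $K_i,K_{ij}$); (2) show the remainder $m_{x,y}[\beta]-\beta(\tfrac{x+y}2)$ carries enough extra decay in $|x-y|2^k$ to be summed absolutely by \eqref{singleannulus}-type estimates; and (3) reassemble with the coarse-scale bound. I anticipate that step (1), verifying the $T(1)$ hypotheses uniformly in the truncation and against a general $L^\infty$ atom substitute, together with the bookkeeping that keeps all constants independent of $\cZ$ and of $\eps$, is where the real work lies.
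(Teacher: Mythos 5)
Your proposal has the right overall architecture (split by scale/frequency, treat the ``resonant'' part by a $T(1)$ theorem, treat the rest by cancellation), but the crucial mechanism in the hard regime is not the one you describe, and as written your step~(2) does not work. You propose to split $m_{x,y}[\beta] = \beta\bigl(\tfrac{x+y}{2}\bigr) + \bigl(m_{x,y}[\beta]-\beta(\tfrac{x+y}{2})\bigr)$ and claim the remainder carries ``extra decay in $|x-y|2^k$.'' It does not: $\beta=Db$ is only assumed to be in $H^1$, so $\beta(\tfrac{x+y}{2})$ has no regularity advantage over the line average and the difference has no smallness; moreover, on the support of $\chi_k$ one has $|x-y|2^k\approx 1$, so there is nothing to decay in. The paper's decisive move is different: for each $k$ one mollifies $b$ at the matching scale, writing $b = \phi_k*b + \sum_{n\ge 1}\psi_{k+n}*b$ (with $\phi$ having mean one and vanishing first moments). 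The piece $\sum_k S_k[g,\phi_k*b]$ is then a \emph{single} Calder\'on--Zygmund operator $T$ acting on $Db$ — the $k$-dependent mollification is exactly what makes its Schwartz kernel $K(x,z)$ satisfy the size/smoothness estimates and, crucially, what lets the divergence-free condition produce $T1,T^*1\in BMO$ and weak boundedness (Proposition~\ref{mainfirst}, via David--Journ\'e). The high-frequency pieces $S_k[g,\psi_{k+n}*b]$ gain a factor $2^{-n}$ (or $n2^{-n}$ for the $H^1$ endpoint) from the \emph{frequency mismatch} $n$, not from extra decay in $|x-y|$ (Proposition~\ref{easy}); this is realized through the commutator relation \eqref{comm}, which trades $\psi_{k+n}*b$ for $2^{-k-n}\Psi_{k+n}*\nabla b$ against the $2^k$ from the kernel.

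A secondary point: you apply the atomic decomposition of $Db$ at the outset and then split scales relative to the atom's side length $\ell$, whereas the paper first establishes the Calder\'on--Zygmund structure (via the $\phi_k*b$ vs.\ $\psi_{k+n}*b$ split) and only then runs the standard $H^1\to L^1$ atom argument inside each of the two propositions. Your ordering is not in principle wrong, and your coarse-scale estimate ($2^{-k}>\ell$, using $\int a=0$) corresponds in spirit to the $k<L-n$ case in the proof of Proposition~\ref{easy}(ii) and the off-$Q^*$ estimate in Proposition~\ref{mainfirst}(ii). But without the mollification-of-$b$-at-scale-$2^{-k}$ idea, your fine-scale regime $2^{-k}\le\ell$ has no mechanism to avoid the logarithmic loss — which, as you yourself note, is exactly the obstruction. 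The gap is concrete: you need the $k$-dependent decomposition of $b$, not a decomposition of the line average or of the atom.
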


We need further decompositions. 
Let $\phi$ be a $C^\infty$ function with support in $\{x:|x|\le 1/2\}$ such that 
\begin{subequations}
\Be \label{intphi}\int \phi(x) dx=1\Ee
and \Be\label{intphimoment} \int \phi(x) x_i dx=0, \quad i=1,\dots, d.
\Ee
\end{subequations}
Define
\begin{align} 
\phi_k(x)&=2^{kd} \phi(2^kx)
\notag
\\
\psi_l(x)&= \phi_{l}(x)-\phi_{l-1}(x)
\notag
\end{align}
For every $k$ we have,  in the sense of distributions,
\Be\label{delta}\phi_k+\sum_{n=1}^\infty \psi_{k+n}=\delta;
\Ee  here $\delta$ is the  Dirac measure. Note that $\int \psi_l(x) \pi(x)dx=0$ for all affine linear functions $\pi$.

Theorem 
\ref{sumhSk}
follows immediately from the second parts of the following two propositions. All constants will be independent of the cardinality of $\cZ$.

\begin{prop} \label{mainfirst} (i) For
 $1<p<\infty $,
\[\Big\|\sum_{k\in \cZ} S_k [g, \phi_k* b]\Big  \|_{p} \le C(p_1,p_2)  \|g\|_{\infty} \|Db\|_{p}\,.
\]
(ii) 
\[\Big\|\sum_{k\in\cZ} S_k [g, \phi_k* b]\Big  \|_{1} \le C  \|g\|_{\infty} \|Db\|_{H^{1}}\,.
\]
\end{prop}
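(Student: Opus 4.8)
The plan is to analyze the operator $g\mapsto \sum_{k\in\cZ}S_k[g,\phi_k*b]$ by inserting the resolution of the identity \eqref{delta} on the \emph{inner} copy of $b$, i.e. to write $b = \phi_k*b + \sum_{n\ge 1}\psi_{k+n}*b$ relative to the scale $2^{-k}$ that governs $\chi_k$. Since the argument of $S_k$ already has $b$ replaced by its low-frequency part $\phi_k*b$, the first step is to exploit the cancellation coming from the two moment conditions \eqref{intphi}--\eqref{intphimoment}: because $\phi$ has mean one and vanishing first moments, $\phi_k*b$ differs from $b$ only through frequencies $\gtrsim 2^k$, and more usefully, $D(\phi_k*b) = \phi_k*(Db)$ is a genuine Littlewood--Paley-type smoothing of $Db$ at scale $2^{-k}$. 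I would first rewrite $S_k[g,\phi_k*b]$ using the identity $ (\phi_k*b)(x)-(\phi_k*b)(y)=\int_0^1 D(\phi_k*b)(sx+(1-s)y)\,ds\,(x-y)$, so that the kernel becomes $\chi_k(x-y)\,|x-y|^{-d}\,\Theta((x-y)/|x-y|)$ times an average of $\phi_k*(Db)$ over the segment $[x,y]$ — that is, $S_k$ is essentially a $d$-commutator $\cC_{K_k}[g,\phi_k*(Db)]$ with $K_k$ a smooth truncation to the annulus $|x|\sim 2^{-k}$ of a Calderón--Zygmund kernel of the type \eqref{Ki}--\eqref{Kij}.

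Next I would set $u_k = \phi_k*(Db)$ and study $T[g,Db]:=\sum_k \cC_{K_k}[g,u_k]$. The key observation is that $u_k$ is a \emph{reconstruction-type} average: $\sum_k$ pairs the scale-$2^{-k}$ piece of the kernel with the scale-$\ge 2^{-k}$ smoothing $u_k$ of $Db$, so morally only the diagonal and near-diagonal frequency interactions survive. For part (i), $1<p<\infty$, I would test against $h\in L^{p'}$, expand $u_k$ further via $Db = \sum_m \psi_m*(Db)$ (with $\psi$ as in \eqref{delta}), and show that the bilinear form $\int h \sum_k \cC_{K_k}[g,u_k]$ decomposes into pieces indexed by $n=k-m\ge 0$ with operator norms decaying geometrically in $n$; each fixed-$n$ piece is a vector-valued singular integral / square-function estimate controlled by the $L^p\times L^\infty$ mapping properties of the $K_k$ and the almost-orthogonality of the $\psi_m*(Db)$, using $\|g\|_\infty$ to pull $g$ out crudely. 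Here the fact that $\Omega$ is even with mean zero on $S^{d-1}$ gives the needed cancellation so that the $n=0$ term is itself a Calderón--Zygmund operator amenable to the David--Journé $T(1)$ theorem (this is the step the excerpt flags: "use the original $T(1)$ theorem by David and Journé for one of the terms and Littlewood--Paley estimates for the others").

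For part (ii), the $H^1$ endpoint, I would use the atomic decomposition of $H^1(\bbR^d)$: it suffices to prove $\|\sum_k S_k[g,\phi_k*\mathfrak{a}]\|_1\lesssim \|g\|_\infty$ uniformly over $1$-atoms $\mathfrak{a}$ for $Db$ — i.e. $\mathfrak{a}$ supported in a ball $Q$ of radius $\rho$, with $\|\mathfrak{a}\|_\infty\le |Q|^{-1}$ and $\int\mathfrak{a}=0$ — where one should be slightly careful that $Db$ is a gradient, but since only the bound on $Db$ itself is used one may work atom-by-atom on the components. One splits $\bbR^d = 2Q \cup (2Q)^c$. On $2Q$ one uses the $L^2$ bound from part (i) together with Cauchy--Schwarz and $\|\mathfrak a\|_2\le |Q|^{-1/2}$. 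On $(2Q)^c$ one uses the cancellation $\int\mathfrak a=0$ against the Hölder regularity of the kernel of the map $\mathfrak a\mapsto \sum_k S_k[g,\phi_k*\mathfrak a]$ in the $b$-variable — and this is where the main work lies: one must show that this composed kernel, after summing over $k$, still enjoys a Calderón--Zygmund-type modulus of continuity $|y-y_0|^\delta/|x-y_0|^{d+\delta}$ uniformly in $\cZ$, which requires exploiting the scale-matching between $\phi_k$ and $\chi_k$ and summing a geometric series in $k$ on each of the regions $2^{-k}\lesssim \rho$ and $2^{-k}\gtrsim \rho$. The main obstacle is precisely controlling this far-away interaction uniformly over the finite set $\cZ$ of scales: the per-scale kernels are not individually integrable in $k$, so one cannot sum absolutely, and the geometric gain has to come from combining the moment conditions on $\phi$ (which kill the low-frequency tail) with the support of $\chi_k$ (which kills the high-frequency tail), leaving an honest exponentially localized sum — this bookkeeping, rather than any single hard inequality, is where the care is needed.
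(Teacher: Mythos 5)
Your high-level plan shares the paper's ingredients — the $d$-commutator reframing of $S_k[g,\phi_k*b]$, the David--Journ\'e $T(1)$ theorem for the $L^p$ bounds, and atomic decomposition with a near/far split for the $H^1$ endpoint — but two points diverge materially. The paper does \emph{not} perform the additional Littlewood--Paley decomposition $Db=\sum_m\psi_m*Db$ that you propose: it applies $T(1)$ once, to the operator
\[
T_{[g]}h(x)=\sum_{k\in\cZ}\int\chi_k(x-y)\,\kappa(x-y)\,g(y)\int_0^1\phi_k*h(sx+(1-s)y)\,ds\,dy,
\]
with $h=Db$ and no further splitting of $h$. Your reorganization by $n=k-m$ with claimed $2^{-n}$ decay is in tension with ``pulling $g$ out crudely'': for $n>0$ the decay comes from testing the mean-zero kernel $\chi_k\kappa$ against a function that is slowly varying at scale $2^{n-k}$, and if you dominate $g(y)$ by $\|g\|_\infty$ and take absolute values of the kernel you destroy exactly that cancellation. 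It is also not ``almost-orthogonality of $\psi_m*Db$'' — it is the same kernel cancellation that the $T(1)$ hypotheses already encode, so the detour gains nothing and would force you to re-derive those hypotheses piecemeal.

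The more serious omission is that you invoke $T(1)$ without ever identifying the two $BMO$ conditions, which is where the paper's real work lies. The kernel size/regularity estimates and the weak boundedness property are the routine part. The crux is $T1,\,T^*1\in BMO$: the paper computes $T1=\kappa*g$ (using that $\chi_k$ is radial, so $\int\chi_k\kappa=0$), hence $T1\in BMO$ follows from classical Calder\'on--Zygmund theory; and it computes $T^*1=\sum_{k}\phi_k*\int_0^1\kappa_{k,s}\,ds\,*g$ with $\kappa_{k,s}(x)=s^{-d}\chi_k(s^{-1}x)\kappa(s^{-1}x)$, then proves $L^2$ boundedness of the resulting convolution via the Fourier bound $|\widehat{\kappa_{k,s}}(\xi)|\lesssim s2^{-k}|\xi|(1+s2^{-k}|\xi|)^{-N}$ (summable in $k$ uniformly in $s,\xi$) and upgrades to $L^\infty\to BMO$. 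The $T^*1$ step is nontrivial precisely because of the $m_{x,y}$-averaging in the $d$-commutator, and no amount of Littlewood--Paley bookkeeping removes that difficulty. Your part (ii) argument matches the paper's once the $L^p$ theory is in place (the paper works with $2$-atoms rather than $\infty$-atoms, but the near/far split, the $L^2$ bound on $Q^*$, and the use of $\int a=0$ against kernel regularity off $Q^*$ are the same).
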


\begin{prop} \label{easy}
(i) Let  $1<p_1,p_2,q<\infty$ and $1/p_1+1/p_2=1/q$. Then for $n=1,2,3,\dots$
\[
\Big\|\sum_{k\in\cZ} S_k [g, \psi_{k+n}*  b]
 \Big \|_{q} \le C(p_1,p_2) 2^{-n} \|g\|_{p_2} \|Db\|_{p_1}\,.
\]
(ii) 
\[
\Big\|\sum_{k\in \cZ} S_k [g, \psi_{k+n}*  b]
 \Big \|_{1} \le C n 2^{-n} \|g\|_{\infty} \|Db\|_{H^1}\,.
\]
\end{prop}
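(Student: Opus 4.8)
\textbf{Proof proposal for Proposition \ref{easy}.}

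The plan is to isolate, for fixed $n$, the ``off-diagonal gain'' coming from the fact that in $S_k[g,\psi_{k+n}*b]$ the averaging operator $\psi_{k+n}*$ lives at scale $2^{-k-n}$, much finer than the scale $2^{-k}$ on which $S_k$ itself operates, and the annulus $\{|x-y|\sim 2^{-k}\}$ cutoff $\chi_k$ has derivative of size $2^k$. First I would rewrite $S_k[g,\psi_{k+n}*b]$ using $b(x)-b(y)=\int_0^1 Db(sx+(1-s)y)\,ds\,(x-y)$, so that it takes the form of a $d$-commutator $\cC_{K_k}[g,\beta]$ with $\beta$ replaced by $\psi_{k+n}*\beta_{ij}$ and $K_k$ the truncated kernels $\chi_k(x)K_i(x)$, $\chi_k(x)K_{ij}(x)$ of \eqref{Ki}--\eqref{Kij}. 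For part (i): since $1<p_1,p_2,q<\infty$ with $1/p_1+1/p_2=1/q$, one can run the single-scale bound \eqref{singleannulus} at each scale but must sum in $k$; the point is to exploit the cancellation $\int\psi_l=0$ together with the fact that $\psi_{k+n}$ has one vanishing moment and is concentrated at scale $2^{-k-n}$. Writing $\psi_{k+n}*b(x)-\psi_{k+n}*b(y)$ and Taylor-expanding $\chi_k$ around the center gives, after a commutator manipulation, a kernel whose $L^1$ mass in $y$ over the annulus is $O(2^{-n})$; combined with an $\ell^2$ almost-orthogonality / Littlewood-Paley argument for the sum over $k$ (the $\psi_{k+n}*Db$ are nearly orthogonal pieces of a Littlewood-Paley decomposition of $Db$), this yields the factor $2^{-n}$ with vector-valued $L^{p_1}$ and $L^{p_2}$ maximal/square-function estimates absorbing $\|g\|_{p_2}\|Db\|_{p_1}$.

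For part (ii), the endpoint $q=1$, the natural route is the atomic decomposition of $H^1(\bbR^d)$: it suffices to prove
\[
\Big\|\sum_{k\in\cZ} S_k[g,\psi_{k+n}*\partial_j a]\Big\|_1 \le C n 2^{-n}\|g\|_\infty
\]
for a single $H^1$-atom $a$ (so that $Db$ is replaced by $\partial_j a$, or rather one treats $b$ with $Db=$ atom directly), say supported in a ball $Q$ of radius $\rho=2^{-m}$, with $\|Db\|_\infty\le |Q|^{-1}$ and $\int Db=0$. One splits the $x$-integral into the region near $Q$ (within $C\rho$ of $Q$) and the tail. On the local region one uses the $L^2\to L^2$ (or $L^{p}\to L^p$) bound from part (i) with $p$ slightly larger than $1$, together with Hölder and the size $|Q|$ of the region, to get an $O(2^{-n})$ bound; the extra factor $n$ appears here because summing the $k$ for which scale $2^{-k}$ interacts with $\rho$ contributes $\log$-type losses only over an $O(1)$ range, so in fact the local piece is clean and the $n$ is not needed there. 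On the tail region $|x|\gtrsim \rho$ one uses the vanishing moment of the atom $a$ (equivalently $\int Db=0$) against the smoothness of the truncated kernel $\chi_k(x-y)K(x-y)$ in $y$: the difference $K_k(x-y)-K_k(x-y_0)$ for $y,y_0\in Q$ gains a factor $\rho\,2^k$ when $2^{-k}\gtrsim |x|$ (and the term vanishes when $2^{-k}\ll |x|$ because then $\chi_k(x-y)=0$), so only scales $2^{-k}$ in a window of length $\log(|x|/\rho)$ contribute, and integrating $|x|^{-d}$-type decay over the tail produces the single factor $n=\log(2^n)$-style bound; more precisely the $\psi_{k+n}$ averaging again contributes $2^{-n}$, and the number of relevant $k$ produces at worst the linear factor $n$.

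The main obstacle I expect is the endpoint part (ii), specifically controlling the tail contribution in a way that gives \emph{exactly} the $n2^{-n}$ decay rather than, say, $n^2 2^{-n}$ or a constant: one must be careful that the two logarithmic-type losses (one from the range of scales $k$ interacting with the atom in the tail region, one potentially from summing the geometric series of annuli as $|x|\to\infty$) do not both appear, and that the $2^{-n}$ from the fine averaging is genuinely extracted via the vanishing moment of $\psi_{k+n}$ and not eaten by the kernel truncation. Organizing the double use of cancellation — the vanishing moment of the atom in $y$ \emph{and} the vanishing moment of $\psi_{k+n}$ — so that they combine multiplicatively (giving $\rho 2^k\cdot 2^{-n}$) rather than competing is the delicate bookkeeping step. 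I would handle this by fixing, for each $x$ in the tail, the finitely many scales $k$ with $\chi_k(x-y)\not\equiv 0$ on $Q$, namely $2^{-k}\gtrsim|x|$, bounding each such term by $|Q|\cdot|Q|^{-1}\cdot \rho 2^k \cdot 2^{-n}\cdot(\text{kernel size})$, and summing first in $k$ (geometric, dominated by the largest relevant scale, i.e. $2^{-k}\sim|x|$, losing only the count $O(\log(|x|/\rho))$ of scales) and then in $x$ over dyadic shells, where the count $\log(|x|/\rho)$ against the $|x|^{-d}$ decay integrates to $O(1)$, leaving the advertised $n2^{-n}$ after also summing the range of $|x|$ up to where $g\|_\infty$ and compact support considerations truncate — whence part (ii) and, with part (i), the proposition.
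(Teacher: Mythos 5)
Your proposal has the right skeleton---commutator structure, Littlewood--Paley, atomic decomposition, vanishing moments---but the two places where the quantitative gain must be extracted are not correctly argued, and I think neither can be repaired without essentially switching to the paper's organization.

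For part (i), the mechanism by which $2^{-n}$ is extracted is not the ``Taylor expansion of $\chi_k$'' you describe; that manipulation does not obviously produce a kernel with $L^1$ mass $O(2^{-n})$, since $\psi_{k+n}*b$ is not a small perturbation of $b$ and the estimate $\|\psi_{k+n}*b\|_\infty\lc\|b\|_\infty$ has no gain. The paper's mechanism is sharper and more structural: the commutator identity \eqref{comm} produces an explicit factor $2^k$ in front of terms involving $h_i=\psi_{k+n}*b_i$, and one then writes $\psi_l=2^{-l}\sum_j\partial_j\Psi^{[j]}_l$ with $\Psi^{[j]}$ still mean-zero, so that $\psi_{k+n}*b_i=2^{-k-n}\sum_j\Psi^{[j]}_{k+n}*\partial_j b_i$. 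The product $2^k\cdot 2^{-k-n}=2^{-n}$ is the gain, and what remains is a genuine Littlewood--Paley piece $\Psi_{k+n}*Db$ to which the square-function estimates \eqref{Sqfct}, \eqref{Sob} apply. Your description never identifies this divergence rewrite, which is the crux.

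For part (ii), your spatial split (near $Q$ versus tail) does not align with where the $n$ factor actually arises, and your own account is internally contradictory: you say the near-$Q$ piece is ``clean, no $n$'' and the tail gives ``at worst $n$,'' yet the counting window $\log(|x|/\rho)$ you produce in the tail grows without bound in $|x|$ and is never tied to $n$. The real obstruction is on the near-$Q$ side: for scales $k\le L$ the annulus $|x-y|\sim 2^{-k}$ reaches far outside $Q$, so $g$ cannot be localized and part (i) gives $\|g\|_{p_2}$ over an unbounded region, which is useless for $g\in L^\infty$. The paper instead splits in \emph{scale} into $k>L$ (part (i) with $g$ localized to $Q^{**}$ plus Hölder), $L-n\le k\le L$ (trivial single-scale bound $\|g\|_\infty\|a\|_1$, and these $n$ terms are exactly where the factor $n$ comes from), and $k<L-n$ (atom cancellation $\int a=0$ giving a convergent geometric series). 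The middle range is inescapable: there part (i) fails because of the localization issue above, and the atom cancellation fails because $\Psi_{k+n}$ is still at a finer scale than $Q$ (i.e.\ $k+n\ge L$). Your spatial decomposition lumps this range into the near-$Q$ region and then claims it costs nothing, which is not correct. To make the endpoint bound $n2^{-n}$ come out you need the three-range split by scale relative to $L$ and $L-n$.
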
 
The proofs of the two propositions will be given in \S\ref{proofofmainfirst} and \S\ref{proofofeasy}.

\begin{remark}
Our proofs will show that if the index set $\cZ$ is a subset of $\bbZ_+$ then the Hardy space $H^1$ in Propositions \ref{mainfirst} and \ref{easy} can be replaced by the local Hardy space 
$h^1$ (cf. \ref{hardyremarks} below).
\end{remark}

\begin{remark}  
 The condition $\text{div}( b)=0$ is crucial for
Proposition \ref{mainfirst} but not needed for Proposition \ref{easy}. There are also $L^{p_1}\times L^{p_2}\to L^q$ estimates for other exponents with $p_1^{-1}+p_2^{-1}=q^{-1}$ in Proposition \ref{mainfirst} but they will not be relevant for Theorem \ref{bressan-bianch}.
\end{remark}

\begin{remarks}\label{hardyremarks} {\it On  Hardy spaces and atomic decompositions.} 
The proof of the Hardy space inequalities will  rely on the atomic decomposition (see e.g. \cite{stein3} for an exposition and historical references).
Let $1<r\le \infty$. 
We say that $a$ is an $r$-atom associated with a cube $Q$ if $a$ is supported in $Q$, if
$\|a\|_{L^r(Q)}\le |Q|^{-1+1/r}$ and if $\int a(x) dx=0$. Note that $\|a\|_1\le 1$ for atoms.
The atomic characterization of  $H^1$ states that any $f\in H^1$ can be decomposed as $f=\sum_Q \la_Q a_Q$ with convergence in $L^1$,
 where $a_Q$ are $r$-atoms and $\sum_Q|\la_Q|<\infty$.  
 The norm 
$\|f\|_{H^1}$ is equivalent to $\inf \sum_Q|\la_Q|$ where the infimum is taken over all such  decompositions of $f$. We shall assume $r<\infty$. 
An operator $T$ maps $H^1(\bbR^d)$ to $L^1(\bbR^d)$ if and only  we have $\|Ta\|_1\lc C$ for all  $r$-atoms; the 
infimum over such  $C$ is equivalent to the $H^1\to L^1$ operator norm of $T$. 
We refer to 
\cite{bownik}, \cite{MSV} for the reason why it is preferable to  work with $r$-atoms for $r<\infty$.

For compact manifolds  the appropriate Hardy space is the local Hardy space $h^1$,
introduced  by Goldberg \cite{goldberg}, which can be identified with the  Triebel-Lizorkin space $F^0_{p,q}$ for  $p=1$ and $q=2$, \cite{Tr83}. Functions in $h^1$ can be localized, i.e. if $f\in h^1$ and if $\chi\in C^\infty_0$ then $\chi f\in h^1$.
More generally, classical  pseudo-differential operators of order $0$ are bounded on $h^1$ (see \cite{goldberg}). 
Finally an operator $T$ maps $h^1 $ to $L^1$ if 
we have $\|Ta\|_1\lc C$ for all  $r$-atoms associated to  cubes with diameter $\le c_0$
and if in addition  $\|Tb\|_1\lc 1$ for all $L^r$ functions $b$ with
 $\|b\|_r \le 1$, which are supported on sets of bounded diameter.
\end{remarks}
\section{Proof of Proposition \ref{mainfirst}}\label{proofofmainfirst}
We shall use the $T1$ theorem of David and Journ\'e \cite{david-journe}.
For each term $S_k[g, \phi_k*b]$ we use the identity \eqref{KiKij} with  
$\phi_k*b$ in place of $b$, and with $\phi_k*\beta_{ij}$ in place of $\beta_{ij}$.
This reduces matters to the estimate of a singular integral operator $T\equiv T_{[g]}$ which acts 
on functions $h$,  and is,  
for fixed $g\in L^\infty$, defined by
\Be\label{Tdef}Th(x)=  \sum_{k\in \cZ} \int \chi_k(x-y) \ka(x-y) g(y)\int_0^1 \phi_k*h(sx+(1-s)y) ds \, dy.\Ee
Here $\ka$ is smooth away from the origin, homogeneous of degree $-d$, with mean value $0$ over $S^{d-1}$; in particular it can be any of the kernels in \eqref{Ki}, \eqref{Kij}.
Proposition \ref{mainfirst} follows from  the  inequalities
\begin{align}
\label{TLp}
\|Th\|_p &\lc \|g\|_\infty \|h\|_p,
\\
\label{Thardy}
\|Th\|_1 &\lc \|g\|_\infty \|h\|_{H^1}.
\end{align}

We now have to verify the hypothesis of   the David-Journ\'e theorem \cite{david-journe}.
Let $K$ be the Schwartz kernel of $T$, i.e. we have 
$$Th(x)=\int K(x,z) h(z) dz$$ for $h\in L^1+L^\infty$;  by our assumption on the index set $\cZ$ we see  $K(x,\cdot)$ is bounded and compactly supported (although $\cZ$ and these assumptions are not supposed to quantitatively enter 
in our estimates). We need to check that 
 $K$ and its derivatives  satisfy standard bounds for singular kernels, which are controlled by the $L^\infty$ norm of $g$;  {\it i.e.}
\Be\label{size}
|K(x,z)|\lc \|g\|_\infty |x-z|^{-d}
\Ee
and
\Be\label{deriv}
|\nabla_x K(x,z)| + |\nabla_z K(x,z)|
\lc \|g\|_\infty |x-z|^{-d-1}\,.
\Ee

Secondly, $T$ needs to satisfy  the {\it weak  boundedness property}. Let $\cN$ be the class of $C^1$ functions supported in $\{x:|x|\le 1\}$ such that 
$\|u\|_\infty+\|\nabla u\|_\infty \le 1$. 
For $u\in \cN$ define the translated and dilated versions $u_R^w$, $R>0$, $w\in \bbR^d$, by
 $u_R^w(x)= u(R^{-1}(x-w))$. Then we need to verify  for all $u,\tilde u\in \cN$
\Be\label{wbp}
\sup_{w\in \bbR^d} \sup_{R>0} R^{-d} \big|\biginn{ Tu^w_R}{\tilde u^w_R}\big |
\lc \|g\|_\infty\,.
\Ee

Finally,   we need  the crucial $BMO$-conditions
\Be\label{bmo}
\|T1\|_{BMO} + \|T^*1\|_{BMO} \lc \|g\|_\infty.
\Ee

\bigskip

We begin by  checking \eqref{size} and \eqref{deriv}.
We have $K(x,z)=\sum_k K_k(x,z)$ where 
$$
K_k(x,z)= \int \chi_k(x-y)\ka(x-y) g(y) \int_0^1 \phi_k(sx+(1-s)y-z) ds \, dy.
$$
Observe that $$K_k(x,z)=0 \text{  for $|x-z|\ge C 2^{-k}.$} $$
It is  immediate from the definition that 
$$|K_k(x,z)| \lc 2^{kd} \|g\|_\infty$$
and 
$$|\nabla_x K_k(x,z)|+|\nabla_z K_k(x,z)| \lc 2^{k(d+1)} \|g\|_\infty.$$
Fix $x,z$ and sum over $k$ with $2^k\lc |x-z|^{-1}$, and \eqref{size} and \eqref{deriv} follow.

Next, we check the weak boundedness property \eqref{wbp}. Let $T_k$ denote the operator with Schwartz kernel 
$K_k$. We estimate 
$\biginn{ T_ku^w_R}{\tilde u^w_R} $ 
and distinguish the cases $2^k R\le 1$ and $2^k R\ge 1$.

Write 
\begin{align*}
&
\biginn{ T_ku^w_R}{\tilde u^w_R}=
\iint K_k(x,z) u^w_R(z)\tilde u^w_R(x) \, dz\, dx\\&=
\iint\limits
\int \chi_k(x-y)\ka(x-y) g(y) \int_0^1\phi_k(sx+(1-s)y-z) ds \, dy
\\& \qquad\qquad\qquad\qquad\qquad\qquad\qquad\qquad\qquad\quad
\times u^w_R(z)  \,  \tilde u^w_R(x)  \, dx\,dz
\end{align*}
and since we have the conditions
$|x-w|\lc R$,
$|z-w|\lc R$,
$|y-x|\lc 2^{-k}$ for the domains of integration, a straightforward estimation yields 
\[\big |\biginn{ T_ku^w_R}{\tilde u^w_R} \big|
\lc 2^{kd} R^{2d} \|g\|_\infty \,\text{ if } R\le 2^{-k}.
\]

For $R\ge 2^{-k}$ we use that the integrals of $\ka$  over spheres centered at the origin  are zero. 
Since $\chi_k$ is radial we also have
\Be\label{canc} \int \chi_k(x) \ka(x)\, dx=0,\Ee
 for all $k\in \bbZ$.
We may write (after performing a change of variable)
\begin{align}
\notag
& \iint K_k(x,z) u^w_R(z)\tilde u^w_R(x) \, dx\, dz\\
\label{brackets}&=
\int g(y)\int_0^1 \int \phi_k((1-s)y-z)\big[\cdots\big] 
dz\, ds\, dy
\end{align}
where
\begin{align*}
&\big[\cdots\big]= \int u_R^w(z+sx) \tilde u_R^w(x) \chi_k(x-y)\ka(x-y) \, dx
\\
&=
\int \big(
u_R^w(z+sx) \tilde u_R^w(x) 
-u_R^w(z+sy) \tilde u_R^w(y) \big)
\chi_k(x-y)\ka(x-y) \, dx 
\\&= O(2^{-k}R^{-1}).
\end{align*}
Here we have of course used 
the cancellation property \eqref{canc}.
Using  the last  estimate in \eqref{brackets} we see that
\begin{align*}
\big|\biginn{ T_ku^w_R}{\tilde u^w_R}\big | &\lc (2^k R)^{-1}
\|g\|_\infty 
\int_{|y-w|\le CR} \int_0^1\int | \phi_k((1-s)y-z)|\, dz \, ds\, dy
\\ &
\lc (2^k R)^{-1} R^d
\|g\|_\infty \, \text{ if } R\ge 2^{-k}\,.
\end{align*}
Summing in $k$ over $2^{-k}\le R$ yields \eqref{wbp}.

Finally we need to verify the $BMO$ bounds for  $T1$ and $T^*1$.
First,
\begin{align*}
T_k 1(x) &= \int K_k(x,z) dz\\&= 
\int \chi_k(x-y) \kappa(x-y) g(y) \int_0^1 \int \phi_k(sx+(1-s)y-z) dz \, ds\, dy
\\&= (\chi_k \ka)* g(x).
\end{align*}
In view of the assumptions on $\ka$ the operator 
$g\mapsto \sum_k (\chi_k \ka) *g =\ka*g$ is a standard Calder\'on-Zygmund convolution operator and thus bounded from $L^\infty\to BMO$.
Thus  we get
$$\|T1\|_{BMO} \lc \|g\|_\infty.$$
Next,
\begin{align*} 
T_k^*1(z)&=\int K_k(x,z)dx
\\
&=\int\int_0^1\int  \chi_k(x-y)\ka(x-y) g(y)  \phi_k(sx+(1-s)y-z) dx\,ds\, dy
\\
&=\int g(y) \int \int s^{-d} \chi_k(s^{-1}(w-y))\ka(s^{-1}(w-y))\phi_k(w-z) \, dw\, ds\, dy
\end{align*}
where for fixed $y,s$ we changed variables $w=sx+(1-s)y$.

Hence setting $\ka_{k,s}(x)= \chi_k (s^{-1}x)s^{-d} \ka(s^{-1}x)$, we have
\[T^*1 = \sum_{k\in \cZ} \phi_k*\int_0^1 \ka_{k,s} ds\,  *g\,.\]

For fixed $s$ we use the cancellation of $\chi_k\ka$ to get an estimate for the Fourier transform
  of $\ka_{k,s}$,
  \[|\widehat {\ka_{k,s}} (\xi) | \le C_N s2^{-k}|\xi| (1+s2^{-k}|\xi|)^{-N}\,.\]
  It follows that $\sup_{\xi,s} \sum_k|\widehat {\ka_{k,s}}(\xi)|\le C$ and since $\widehat \phi_k =O(1)$ we see that 
     the Fourier transform of $\sum_k \phi_k *\ka_{k,s}$ is bounded, independently of $s$. Integrating  over  $s\in [0,1]$ we see that
\[
\Big\|\sum_{k\in \cZ} \phi_k*\int_0^1 \ka_{k,s} ds\,  *f\Big \|_2\lc \|f\|_2.\]
It is also clear that 
the convolution kernel satisfies  standard size and differentiability estimates 
in  Calder\'on-Zygmund theory and consequently  we get $L^\infty\to BMO$ boundedness. It follows that 
$$\|T^*1\|_{BMO} \lc \|g\|_\infty$$
and 
\eqref{bmo} is proved. This completes the proof of the $L^p$ estimates \eqref{TLp}.

The Hardy space estimate \eqref{Thardy} follows from the corresponding estimates on atoms
which are standard \cite{stein3}. For completeness we include the argument. Let $a$ be a $2$-atom associated with a cube $Q$ centered at $y_Q$ and let $Q^{*}$ be the triple cube. Then
$$\int_{Q^*}|Ta(x)|\, dx 
\le |Q^*|^{1/2} \|Ta\|_2 \lc
|Q^*|^{1/2} \|g\|_\infty \|a\|_2 \lc \|g\|_\infty.
$$
Since  $\int a(y) dy=0$ we get 
$$
\int_{\bbR^d\setminus Q^*}|Ta(x)|\, dx 
= \int_{\bbR^d\setminus Q^*}
 \int (K(x,y)-K(x,y_Q)) a(y) \,dy\, dx \lc
 \|g\|_\infty
 $$
given the size and derivative
assumptions in \eqref{size} and \eqref{deriv} and $\|a\|_1\le 1$. This finishes the proof of 
Proposition \ref{mainfirst}. \qed

\section{Proof of Proposition  \ref{easy}}\label{proofofeasy}
This will be  straightforward from standard estimates for singular convolution operators. 
Let
\[ \cK_{i,k}(x)= \chi_k(x) \frac{2^{-k}x_i}{|x|^{d+2}}.\]
We observe the commutator relation 
\Be \label{comm}
S_k[g, h](x) =2^k  \sum_{i=1}^d \big(
\cK_{i,k} * g(x) \,h_i(x) 
-\cK_{i,k} * [g h_i](x) \big)
\Ee
which we use with the choice $h_i=\psi_{k+n} *b_i$.
Notice that $\cK_{i,k}$ is an odd kernel and therefore
\Be\label{dualitySk}
\int \cK_{i,k}\!*\!g(x)\, h_i(x) f(x) \, dx=
- \int \cK_{i,k}\!* \![fh_i](x) \,g(x) \, dx
\Ee
Hence, in order to prove part (i) of  Proposition \ref{easy} it suffices to show
\begin{multline} \label{part-i}
\Big|\sum_{k\in \cZ} 2^k \int \cK_{i,k}\!*\!g(x) \,\psi_{k+n}\!*\!b_i(x) \, f(x) dx\Big|
\lc 2^{-n} \|f\|_{p_1}\|g\|_{p_2} \|\nabla b_i\|_{p_3}, \\ \text{ with } 
p_1^{-1}+p_2^{-1}+p_3^{-1}=1
 \text{ and }1<p_1,p_2,p_3<\infty\,.
\end{multline}
Moreover, to prove part (ii) it suffices to show
\Be\label{part-ii}
\Big|\sum_{k\in \cZ} 2^k \int \cK_{i,k}\!*\!g(x) \,\psi_{k+n}\!*\!b_i(x) \, f(x) dx\Big|
\lc n2^{-n} \|f\|_{\infty}\|g\|_{\infty} \|\nabla b_i\|_{H^1}.
\Ee

We first simplify  by rewriting the left hand sides as an expression which acts on 
 $\nabla b_i$. Let $\phi$  be as in \eqref{intphi}, \eqref{intphimoment} 
 and define
 for $j=1,\dots d$
 \begin{align*}
 \Psi^{[j]}(x)=& \int_{-\infty}^{x_j} 2^j \phi(2x_1,\dots, 2x_{j-1}, 2s, x_{j+1},\dots, x_d)\, ds
\\& -
 \int_{-\infty}^{x_j} 2^{j-1} \phi(2x_1,\dots, 2x_{j-1}, s, x_{j+1},\dots, x_d)\, ds
 \end{align*}
 Since $\phi$ is supported in $[-1/2,1/2]$ 
 it is then easy to check using \eqref{intphi} that $\Psi^{[j]}$ is also supported 
  in $[-1/2,1/2]$; moreover from  \eqref{intphimoment} and integration by parts we get
  $$\int \Psi^{[j]}(x) dx=0.$$ Now 
  let $\Psi_l^{[j]}(x)= 2^{ld}\Psi^{[j]}(2^l x)$, and we  
 verify that
    $$\psi_l= 2^{-l} \sum_{j=1}^d \frac{\partial \Psi_l^{[j]}}{\partial x_j}.$$
Thus by integration by parts 
$$\psi_{k+n}*b_i= 2^{-k-n} \sum_{j=1}^d\Psi^{[j]}_{k+n} *\frac{\partial b_i}{\partial x_j}\,.$$

Let $\Psi$ be any smooth function supported in $[-1/2,1/2]^d$ such that $\int\Psi(x) dx=0$,
and $\Psi_l=2^{ld}\Psi (2^l\cdot)$. The above  considerations imply   that in order to establish  
\eqref{part-i}, \eqref{part-ii}
 it suffices to prove
\Be\label{part-i-mod}
\Big|\sum_{k\in \cZ}  \int \cK_{i,k}\!*\!g(x) \,\Psi_{k+n}\!*\!h(x) \, f(x) dx\Big|
\lc  \|f\|_{p_1}\|g\|_{p_2} \|h\|_{p_3},\Ee 
with $\frac 1{p_1}+\frac 1{p_2}+\frac 1{p_3}=1,$ and $1<p_1,p_2,p_3<\infty,$ and 
\Be\label{part-ii-mod}
\Big|\sum_{k\in \cZ}  \int \cK_{i,k}\!*\!g(x) \,\Psi_{k+n}\!*\!h(x) \, f(x) dx\Big|
\lc n \|f\|_{\infty}\|g\|_{\infty} \|h\|_{H^1}.
\Ee

\subsubsection*{\it Proof of \eqref{part-i-mod}}
We apply H\"older's inequality several times and dominate the left hand side of \eqref{part-i-mod}
by
\begin{align}
&\|f\|_{p_1}\Big \| \sum_{k\in \cZ} (\cK_{i,k} *g )  (\Psi_{k+n}*h) \Big\|_{p_1'} 
\notag
\\
& \le \|f\|_{p_1} 
\Big \| \Big(\sum_k |\cK_{i,k} *g |^2\Big)^{1/2} \Big(\sum_k| \Psi_{k+n}*h|^2\Big)^{1/2} \Big\|_{p_1'} 
\notag
\\
& \le \|f\|_{p_1}
\Big \| \Big(\sum_k |\cK_{i,k} *g |^2\Big)^{1/2}\Big \|_{p_2} \Big\|\Big(\sum_k| \Psi_{k+n}*h|^2\Big)^{1/2} \Big\|_{p_3} 
\label{tripleofnorms}
\end{align}
where we have used $1/p_1'=1/p_2+1/p_3$.

For any bounded sequence $\gamma=\{\gamma_k\}$ with $\|\gamma\|_\infty\le 1$,
$\sum_k \gamma_k \cK_{i,k} $ defines a standard Calder\'on-Zygmund convolution kernel in $\bbR^d$  with bounds uniformly in $\gamma$. In particular we may randomly choose $\gamma=\pm 1$ and by 
the standard  averaging argument using Khinchine's inequality for Rademacher functions (see e.g. \cite[ch. II.5]{steinbook})
(or alternatively, arguments for vector-valued Calder\'on-Zygmund operators, cf. \cite[Appendix D]{steinbook}) 
we get the inequality
\Be \label{Sqfct}\Big\| \Big(\sum_{k} | \cK_{i,k} * g|^2\Big)^{1/2} \Big\|_{p_2} \le C(p_2) \|g\|_{p_2}, \Ee
for $1<p_2<\infty$.
Similarly, we also have  the Littlewood-Paley inequality (\cf. \cite[ch. II.5.] {steinbook})
\Be\label{Sob}
\Big\|\Big(\sum_{l\in \bbZ}\big| \Psi_l*h\big|^2\Big)^{1/2}\Big\|_{p_3}\le \widetilde C(p_3) \|h\|_{p_3}, 
\Ee
for $1<p_3<\infty$.
Now \eqref{part-i-mod} follows by using \eqref{Sqfct} and \eqref{Sob} in 
\eqref{tripleofnorms}. \qed

\subsubsection*{\it Proof of \eqref{part-ii-mod}}
Let $r\in (1,\infty)$.  It suffices to prove
\eqref{part-ii-mod} for $h=a$ with $a$ an $r$-atom associated to a cube $Q$. 
 Let $y_Q$ be the center of $Q$ and $Q^*$ be the 
double  cube with same center. Let $Q^{**}$ be the expanded cube with tenfold sidelength.
Let $L$ be such that the side length of $Q$ is between $2^{-L}$ and $2^{-L+1}$. 
We need to prove that
\Be\label{part-ii-atom}
\Big\|\sum_{k\in \cZ}  ( \cK_{i,k}*g)\,(\Psi_{k+n}*a)\,\Big\|_1
\lc n \|g\|_{\infty}.
\Ee

We split the sum in $k$ in three parts, according to 
whether 
$k\ge L$, $L-n\le k\le L$ or $k\le L-n$.


First let  $k>L$.  The support properties of $a$, $\Psi_{k+n}$ and $\cK_{k,i}$ show 
that $\Psi_{k+n}*a$ is supported in $Q^*$ and that
$\cK_{k,i}\!*\![g\bbone_{\bbR^d\setminus Q^{**}}](x)=0$ for $x\in Q^*$.
Hence 
$$\Psi_{k+n}\!*\!a (x)\,\cK_{k,i}\!*\!g(x)=\Psi_{k+n}\!*\!a (x)\,\cK_{k,i}\!*\![g\bbone_{Q^{**}}](x)
$$
in this case.
We choose $p_2, p_3\in (1,\infty)$ such that   $1/p_2+1/p_3+1/r=1$, and $p_3\le r$; for example
$p_2=p_3=r=3$. Now  use the already proven estimate \eqref{part-i-mod} together with H\"older's inequality to get \begin{align*}
&\Big\|
\sum_{\substack{k\in \cZ\\ k> L}}
 ( \cK_{i,k}\!*\!g) \,(\Psi_{k+n}\!*\!a)\Big\|_1
\lc |Q^*|^{1/r} \Big\|\sum_{\substack{k\in \cZ\\ k\ge L}}
 ( \cK_{i,k}\!*\![g\bbone_{Q^{**}}]) \,(\Psi_{k+n}\!*\!a)\Big\|_{r'}
\\
&\lc |Q^*|^{1/r} \|g\bbone_{Q^{**}}\|_{p_2} \,\|a\|_{p_3}
\lc |Q^*|^{1/r} \|g\|_\infty |Q^{**}|^{1/p_2} \,|Q|^{1/p_3-1/r}\|a\|_{r} \lc \|g\|_\infty
\end{align*}
since $\|a\|_r\le |Q|^{-1+1/r}$.

Next for the case $L-n\le k\le L$ we use the straightforward bound
$$\| (\cK_{i,k}\!*\!g) \,(\Psi_{k+n}\!*\!a)\|_1\le
\| \cK_{i,k}\!*\!g\|_\infty\|\Psi_{k+n}*a\|_1 \lc \|g\|_\infty \|a\|_1 \lc \|g\|_\infty\,
$$ and then obtain
$$\Big\| 
\sum_{\substack{k\in \cZ\\ L-n\le k\le L}}
(\cK_{i,k}\!*\!g) \,(\Psi_{k+n}\!*\!a)\Big\|_1\lc n \|g\|_\infty.
$$
Finally, if $k<L-n$ we use $\int a(x)dx=0$ to get
$$\Psi_{k+n}*a = \int \big(\Psi_{k+n}(x-y)-\Psi_{k+n}(x-y_Q) \big) a(y) dy$$
and thus $\|\Psi_{k+n}*a\|_1 \lc 2^{k+n-L} \|a\|_1$.
Hence 
\begin{align*}&\Big\| 
\sum_{\substack{k\in \cZ\\ k< L-n}}
(\cK_{i,k}\!*\!g) \,(\Psi_{k+n}\!*\!a)\Big\|_1
\le \sum_{\substack{ k< L-n}}\|\cK_{i,k}\!*\!g \|_\infty \|\Psi_{k+n}\!*\!a\|_1
\\
&\lc  \|g\|_\infty \sum_{k\le L-n} 2^{k+n-L} \|a\|_1 \lc \|g\|_\infty.
\end{align*}
We combine the three cases and obtain \eqref{part-ii-atom}. 
This completes the proof of Proposition  \ref{easy}. \qed

\section{Additional Remarks}\label{additional}

\subsection{\it On the  result by Crippa and de Lellis}
Corollary \ref {bressanhardy} can also be proved by a modification of the  approach by Crippa and deLellis. The elegant argument outlined in \cite[\S 8]{deLellis} reduces matters to an estimate
 for vector fields $x\mapsto b(x)$, namely 
\Be\label{diffquotient} 
\frac{|b(x)-b(y)|}{|x-y|}
 \le \fM b(x)+\fM b(y)
\Ee
where $\fM$ is a maximal operator to be determined, with 
\Be\label{hardyfM} \|\fM  b\|_{L^1} \lc \|\nabla b\|_{h_1}\,.\Ee

Assume that $|x-y|\le 10^{-2}$.
Now let  $\phi\in C^\infty_c$ supported on $\{y:|y|\le 1/4\}$  such that $\int\phi(y) \, dy=1$,
and $\int y_i\phi(y) dy=0$ for $i=1,\dots,d$. Let 
$\phi_k(x)=2^{kd}\phi(2^kx)$, and
$\psi_k=\phi_{k}-\phi_{k-1}$ so that 
for any $\ell>0$, $$b=\phi_\ell*b+\sum_{k=\ell+1}^\infty\psi_k* b.$$
Now assume $2^{-\ell-1}\le |x-y|\le 2^{-\ell}$. 
\begin{align*}\frac{|\phi_\ell*b(x)-\phi_\ell* b(y)|}{|x-y|}&=\Big| \biginn{\frac{x-y}{|x-y|}}{\int_0^1   \phi_\ell *\nabla b((1-s)x+sy) }\,ds\Big|
\\&\le \cM_0 (\nabla b)(x)+ \cM_0(\nabla b)(y)
\end{align*}
where
$$\cM_0g(x) = \sup_{\ell>4} \sup_{|h|\le 2^{-\ell}}  |\phi_\ell *g(x+h)|.$$
By standard Hardy space theory,
$$\|\cM_0  g\|_{L^1} \lc \|g\|_{h_1}$$
(which will be  applied here to $g=\partial b_i/\partial x_j$).

Secondly, for $k\ge \ell$, 
\begin{align*}\frac{|\psi_k*b(x)-\psi_k *b(y)|}{|x-y|}&\le 2^{\ell+2} \sup_k\big(|\psi_k*b(x)| +|\psi_k*b(y) |\big)\\
&\le M_{1} b(x)+M_1b(y)\end{align*}
with $$M_1b(x)=\sup_{k>0} 2^k|\psi_k*b(x)|\,.$$
Now, by the cancellation property of $\psi$, $\int \psi(y)l(y) dy=0$ for all affine linear functions $l$, we have
$$\|M_1 b\|_1 \le \Big\|\Big(\sum_{k=1}^\infty 2^{2k} |\psi_k*b|^2\Big)^{1/2}\Big\|_1 \lc
 \|\nabla b\|_{h^1};$$
 in fact  by definition of   $M_1$ we have  the better estimate in terms of the Triebel-Lizorkin $F^0_{1,\infty}$-norm of $\nabla b$ (\cf. \cite{Tr83}).  We have now proved 
 \eqref{diffquotient} with $\fM b=\cM_0 (\nabla b)+M_1(b)$   and $\fM$ satisfies \eqref{hardyfM}.

\subsection{\it On  L\'eger's result for transport equations}\label{legersection}  In a recent preprint 
L\'eger  \cite{leger}  considers
solutions $\theta(t,x)$ of the initial value problem \begin{gather*} \partial_t\theta+\text{div}(v\theta)=0\\ \theta(0,\cdot)=\theta_0\end{gather*} on $\bbR^d$; here $v$ is a given divergence-free time-dependent  vector field $v$ on $ [0,\infty)\times \bbR^d$.  See  also  \cite{ltd}, \cite{ikx} for related  versions  of the mixing problem. L\'eger introduces  the functional
$$\cV(f)= \int|\widehat f(\xi)|^2 \log|\xi| d\xi$$ 
which in physical space  is computed to
\begin{multline*}
c_1(d) \Big(\frac 12\iint_{|x-y|\le 1} \frac{|f(x)-f(y)|^2}{|x-y|^d} dx\, dy-
\iint_{|x-y|\ge 1}\frac{f(x) f(y)}{|x-y|^d}\Big)+c_2(d)\|f\|_{L^2}^2
\end{multline*} for suitable constants $c_i(d)$.
He  then shows that 
 \Be \label{legerapp} 
\partial_t \cV(\theta(t,\cdot))=c_d\,
\fS[ \theta(t,\cdot), \theta(t,\cdot) , v(t,\cdot)]
\Ee 
with $\fS$ as in \eqref{fSepsRdef}, \eqref{pv}.
This is closely related to the computation in  Proposition \ref{bianchprop}. 
Note that L\'eger's  reduction to an estimate for $\fS$  works for arbitrary initial data $\theta_0$ while  Proposition \ref{bianchprop}
is limited to indicator functions of sets. L\'eger  uses the results in \cite{sss} (\cf. \S \ref{cj-reduction} above)   to dominate,
for $\theta(t,\cdot)\in L^\infty\cap L^{p'}$,  the right hand side of \eqref{legerapp} by  
$\|\theta(t,\cdot)\|_\infty\|\theta(t\cdot)\|_{p'} \|Dv(t, \cdot)\|_p$.
Our estimate \eqref{trilinearformest} yields the endpoint bound
\Be\label{legerappH1}|\partial_t \cV(\theta(t,\cdot))|\le C_d\|\theta(t, \cdot)\|_\infty^2 \|Dv(t, \cdot)\|_{H^1}\,.\Ee
This inequality can be used to extend other results in  \cite{leger}. For example one obtains  the inequality
$$\cV(\theta(t,\cdot))-\cV(\theta_0)\lc \|\theta_0\|_\infty^2 \int_0^t \|Dv(s,\cdot)\|_{H^1} ds\,.
$$

\section{Failure of a singular integral estimate}
\label{nonL1}
Deviating slightly from our previous notation in \eqref{halftorus} we now let $\Omega_L=(-1, 0)\times(-1,1)$, $\Omega_R=(0,1)\times(-1,1)$.
For a resolution of Bressan's problem on $\bbT^2$  it would be relevant if the inequality 
\Be\label{mixtranslation}
\Big |\iint \frac{\inn{x-y}{b(x)-b(y)}}{|x-y|^{4}} 
\chi_A(x)\chi_B(y) \,dx\, dy\Big| \le C(A,B) \|Db\|_1
\Ee
held for   subsets $A\subset \Omega_L$, $B\subset \Omega_R$ and divergence free vector fields 
$b$, with a constant independent of $A$ and $B$.
In particular we could consider regularized versions of 
$$
b(x)=\begin{cases} (0,1) \text{ for } x_1<0\,,
\\
(0,-1)\text{ for } x_1>0\,.
\end{cases}
$$
Notice that $$Db(x) = \begin{pmatrix} 0&0\\-2 \delta(x_1)&0
\end{pmatrix}$$
where $\delta$ is the Dirac measure in one dimension, and 
thus div(b)=0.
For this choice of $b$ the  expression \eqref{mixtranslation} becomes $|\cI(A,B)|$ with 
\begin{subequations} 
\Be
\cI(A,B)= 
\iint_{(x,y)\in A\times B} K_{|x_1-y_1|}(x_2-y_2)  dx\, dy
\Ee
where 
\Be K_r(s)= \frac{s}{(r^2+s^2)^2}= -\frac{1}{2} \frac{d}{ds}\frac{1}{r^2+s^2}.\Ee
\end{subequations}
We show that 
$\cI(A,B)$ is {\it not} bounded independently of $A\subset \Omega_L$, $B\subset \Omega_R$. 
One gets  a precise upper and lower bound in terms of some separation condition 
on $A$ and $B$.
\begin{prop}
Let  $$\cU(\eps) = \sup\,\big \{ |\cI(A,B)|\,:\,\,\dist (A,B)\ge \eps,\, A\subset \Omega_L,\,B\subset \Omega_R\big\}.$$
Then for $0<\epsilon<1/2$ we have
$$\cU(\eps) \approx \log (1/\eps).
$$
\end{prop}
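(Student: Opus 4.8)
The plan is to reduce the double integral $\cI(A,B)$ to a one–dimensional quantity by carrying out the $s$–integration explicitly using the antiderivative structure $K_r(s) = -\tfrac12\,\partial_s (r^2+s^2)^{-1}$, and then to prove matching upper and lower bounds on the resulting expression. First I would observe that, after the substitution $s = x_2 - y_2$ and $r = |x_1 - y_1|$, the worst configurations are product sets: for fixed $r$ the integrand $K_r(x_2-y_2)$ is odd in $(x_2-y_2)$, so to maximize $|\cI(A,B)|$ one wants $A$ and $B$ to occupy, in the $x_2$–variable, half–lines or intervals arranged so that $x_2 - y_2$ has a definite sign on a large set. Thus the extremal (or near–extremal) choice should be $A = (-1,-\eps)\times(0,1)$ (or $(-1/2,-\eps)\times(0,1/2)$, say) and $B = (\eps,1)\times(-1,0)$, with the separation condition $\dist(A,B)\ge\eps$ forcing $|x_1 - y_1|\gtrsim\eps$ near the "corner". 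I would first establish the \emph{lower bound} $\cU(\eps)\gtrsim\log(1/\eps)$ by plugging in such an explicit pair and computing: integrating $K_r(s)$ over $s$ in a fixed interval gives something comparable to $1/r$ for $r$ in the range $\eps\le r\le 1$ (more precisely $\int_0^1 K_r(s)\,ds = \tfrac12(r^{-2} - (1+r^2)^{-2}\cdot\!$something$)\sim \tfrac1{r^2}\cdot\!$const when integrated against a unit interval, but the genuinely relevant contribution is $\int_{c}^{1} \tfrac{s\,ds}{(r^2+s^2)^2}$, which is $\Theta(1/r^2)$ for $r\lesssim 1$; one then integrates $\tfrac1{r^2}\cdot\!$(length in $x_1$)\, over the strip $\{|x_1-y_1|\sim r\}$, whose measure in the relevant region is $\sim r$, giving $\int_\eps^1 \tfrac{dr}{r}\sim\log(1/\eps)$).

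For the \emph{upper bound} $\cU(\eps)\lesssim\log(1/\eps)$, I would argue for \emph{arbitrary} measurable $A\subset\Omega_L$, $B\subset\Omega_R$ with $\dist(A,B)\ge\eps$. The key is that, since $\Omega_L$ and $\Omega_R$ are separated by the line $x_1 = 0$ and $A,B$ lie on opposite sides, one has $|x_1 - y_1|\le |x_1| + |y_1|$ but also — via the separation $\eps$ — a genuine lower bound on $|x-y|$. Writing $\cI(A,B)$ as an integral over pairs, I would bound
$$
|\cI(A,B)| \le \iint_{\substack{x\in\Omega_L,\ y\in\Omega_R\\ |x-y|\ge\eps}} \frac{|x_2-y_2|}{|x-y|^4}\,dx\,dy \le \iint_{\substack{|x-y|\ge\eps\\ |x_1-y_1|\le 2}} \frac{dx\,dy}{|x-y|^3}\cdot(\text{cross-section bound}),
$$
and then exploit the cancellation in the $s$–variable: for each fixed $r=|x_1-y_1|$ and each fixed $x_2$, the inner integral $\int_{y_2:\,y\in B} K_r(x_2-y_2)\,dy_2$ is bounded \emph{absolutely} by $\int_{\bbR} |K_r(s)|\,ds$, but this diverges, so instead I use that $K_r = -\tfrac12\partial_s(r^2+s^2)^{-1}$ to integrate by parts: $\int_{I} K_r(s)\,ds = \tfrac12\big[(r^2+s^2)^{-1}\big]$ evaluated at endpoints of the (at most countably many) intervals making up the $y_2$–section of $B$ shifted by $x_2$; each such evaluation is at most $r^{-2}$, and in the region where it matters one actually has the endpoint $\gtrsim\eps$, which caps it by $\eps^{-2}$. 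The truly efficient route, though, is: fix $x$; the function $y\mapsto K_{|x_1-y_1|}(x_2-y_2)$ has $\int_{\{|x-y|\ge\eps\}} |\cdot|\,dy \lesssim \int_\eps^{\sqrt2} \tfrac{dr}{r}$ once one integrates the angular part (in polar-type coordinates centered at $x$ in the $y$–plane, $|K| \lesssim |x-y|^{-3}$, and $\int_{\rho\ge\eps} \rho^{-3}\cdot\rho\,d\rho\,d(\text{angle}) \sim \int_\eps^{\sqrt2}\rho^{-2}\,d\rho \sim 1/\eps$, which is too big) — so the absolute-value bound alone gives $1/\eps$, not $\log(1/\eps)$, and cancellation in $s$ is genuinely needed.

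Hence the real content, and the step I expect to be the main obstacle, is extracting the logarithm from the cancellation: one must show that after integrating $K_r(x_2-y_2)$ in $y_2$ over (the section of) $B$, the result decays like $\min(r^{-2}, \eps^{-1}r^{-1})$ or better — heuristically, $|\int_{\bbR} K_r(s)\,\bbone_{J}(s)\,ds|\lesssim 1/r$ uniformly over intervals $J$, which follows from $|\partial_s(r^2+s^2)^{-1}|$ being integrated against a single interval and from $(r^2+s^2)^{-1}\le r^{-1}|s|^{-1}$. Then
$$
|\cI(A,B)| \lesssim \int_{\{x\in\Omega_L\}}\ \int_{\eps}^{2} \frac{dx_1\,dy_1}{|x_1-y_1|}\Big(\sup_J \big|\textstyle\int K_{|x_1-y_1|}\,\bbone_J\big|\Big)\cdot|x_1-y_1| \,\lesssim\, \int_\eps^2 \frac{dr}{r} \sim \log(1/\eps),
$$
where the factor $|x_1-y_1|=r$ comes from the length of the $x_2$–range over which, for fixed $x_1,y_1$, the section of $A$ in $x_2$ can be nonempty while keeping $|x-y|$ comparable to $r$. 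I would organize this by slicing in $r=|x_1-y_1|$ dyadically, $\eps\lesssim 2^{-j}\lesssim 1$, bounding the contribution of each slice by $O(1)$ uniformly in $j$ (absolute value of the $K_r$ integral $\lesssim 2^{j}$, times the $y_1$–measure $\lesssim 2^{-j}$ of the slice, times the $x$–measure $\lesssim 1$ in the corner region where separation forces $x_2-y_2$ to have controlled sign), and summing the $\log(1/\eps)$ slices. Combined with the lower bound this yields $\cU(\eps)\approx\log(1/\eps)$; the separation hypothesis $\dist(A,B)\ge\eps$ enters only to truncate $r$ away from $0$, which is exactly what prevents the bound from being $O(1)$.
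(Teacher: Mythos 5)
Both halves of your argument have substantive gaps; I'll treat them in turn and then contrast with what the paper actually does.

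\textbf{The lower bound.} The product set you propose, $A=(-1,-\eps)\times(0,1)$, $B=(\eps,1)\times(-1,0)$ (or any of its variants), does \emph{not} give a logarithmic lower bound --- it gives $O(1)$. For fixed $x_1,y_1$ with $r=|x_1-y_1|=|x_1|+|y_1|$, the inner integral over $(x_2,y_2)\in(0,1)\times(-1,0)$ is
\[
\int_0^2\frac{s\,\min(s,2-s)}{(r^2+s^2)^2}\,ds\;\approx\;\frac1r,
\]
because $x_2-y_2\ge 0$ on $A\times B$ and the set $\{s\approx r\}$ has cross-section only $\sim r$, so you only capture $\sim 1/r$ rather than the full $\int_{\bbR}|K_r|\sim 1/r^2$. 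The remaining $(x_1,y_1)$ integral is then $\iint_{a,b\in(\eps,1)}(a+b)^{-1}\,da\,db$, which is $O(1)$ (per dyadic shell $a+b\approx 2^{-j}$, the measure is $\approx 2^{-2j}$ and the integrand is $\approx 2^j$, giving $\approx 2^{-j}$ per shell, summable). The same $O(1)$ outcome persists if you move the $\eps$-truncation from the $x_1$-variable to the $x_2$-variable. Geometrically: to make each scale $r\in(\eps,1)$ contribute a fixed amount, you need $A$ and $B$ to realize $x_2-y_2\approx r$ (with one sign) on a set of $(x_2,y_2)$-measure $\approx r$ \emph{simultaneously with} $|x_1-y_1|\approx r$ on a set of $(x_1,y_1)$-measure $\approx r^2$, and this coupling of the two scales cannot be achieved by a single rectangle. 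This is exactly why the paper constructs a multi-scale set: unions over $k$ of columns of side-$2^{-kM}$ squares $I^{L}_k\times J^L_{k,n}$, $I^R_k\times J^R_{k,n}$, arranged so that for matching $(k,n)$ one has both $|x_1-y_1|\approx 2^{-kM}$ and $x_2-y_2\approx 2^{-kM}$ with a definite sign; each scale then contributes $\gtrsim 1/M$ and the $\approx L$ scales accumulate to $\gtrsim L\approx\log(1/\eps)$, after one checks that the cross-terms $\cE_2,\cE_3$ are lower order.

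\textbf{The upper bound.} Your assertion that the absolute-value bound ``gives $1/\eps$, not $\log(1/\eps)$, and cancellation in $s$ is genuinely needed'' is mistaken. The loss comes from using the crude isotropic bound $|K_r(s)|\lesssim(r^2+s^2)^{-3/2}=|x-y|^{-3}$ and polar coordinates around $x$, which throws away the anisotropy of $K_r(s)=s(r^2+s^2)^{-2}$ and the geometric fact that for $x\in\Omega_L$, $y\in\Omega_R$ one has $|x_1-y_1|=|x_1|+|y_1|$. The paper's upper bound uses \emph{no cancellation at all}: writing $|K_r(s)|=r^{-3}g(s/r)$ with $g(s)=|s|(1+s^2)^{-2}\in L^1$ and integrating $(x_2,y_2)$ first gives $\lesssim r^{-2}$ per slice, and then $\iint_{a+b\gtrsim\eps,\ a,b\in[0,1]}(a+b)^{-2}\,da\,db\approx\log(1/\eps)$, since each dyadic shell $a+b\approx 2^{-j}$ contributes $O(1)$ (measure $\approx 2^{-2j}$ against integrand $\approx 2^{2j}$). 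The separation $\dist(A,B)\ge\eps$ enters by splitting into the regime $|x_2-y_2|\lesssim|x_1-y_1|$ (forcing $|x_1-y_1|\gtrsim\eps$) and the complementary dyadic regimes where the decay of $g$ compensates. Your alternative cancellation route has its own problems: the claimed bound $\sup_J|\int K_r\bbone_J|\lesssim 1/r$ is off --- the antiderivative $-\tfrac12(r^2+s^2)^{-1}$ only gives $\lesssim r^{-2}$ at each endpoint --- and for a \emph{general measurable} $B$ the $y_2$-section is not an interval, so the sum of endpoint evaluations over its (possibly infinitely many) components need not converge, and you would be forced back to an $L^1$ bound on $K_r$ over the section, i.e.\ the absolute-value argument anyway.

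In short: the two essential ideas you would need --- a multi-scale construction for the lower bound, and the anisotropic absolute-value estimate $\int|K_r(x_2-y_2)|\,dx_2\,dy_2\lesssim r^{-2}$ coupled with the annular decomposition of the $(x_1,y_1)$-region for the upper bound --- are both absent from your proposal, and both substitutes you propose (a single rectangle; an endpoint/cancellation bound) fail.
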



\subsection{\it Upper bounds}
Suppose $g$ satisfies \Be \label{gassu}\sup_s(1+|s|)^{\delta+1}|g(s)|<\infty, \Ee for some $\delta>0$. 
Note that $K_r(s)= r^{-2} r^{-1}g(s/r)$ if we take $g(s)=s(1+s^2)^{-2}$,
 and thus the following estimate gives the upper bound in the proposition.
\begin{lemma} Suppose $A\subset \Omega_L$, $B\subset \Omega_R$ and  $\dist(A,B)>\eps$. Then, with 
$g$ as in \eqref{gassu},
$$\iint_{\Omega_L\times \Omega_R} |x_1-y_1|^{-3} |g(\tfrac{x_2-y_2}{|x_1-y_1|})| \chi_B(y)\chi_A(x) dx\, dy 
\lc \log(1/\eps).$$
\end{lemma}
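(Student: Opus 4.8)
The goal is to bound
$$
\cI:=\iint_{\Omega_L\times \Omega_R} |x_1-y_1|^{-3}\, \big|g\big(\tfrac{x_2-y_2}{|x_1-y_1|}\big)\big|\, \chi_B(y)\chi_A(x)\, dx\, dy
$$
by $C\log(1/\eps)$, using only the decay hypothesis \eqref{gassu} on $g$ and the separation $\dist(A,B)>\eps$. Since $A\subset\Omega_L$ and $B\subset\Omega_R$, we have $x_1<0<y_1$, so $|x_1-y_1|=y_1-x_1=:r>0$. The separation condition $\dist(A,B)>\eps$ forces $r\ge \eps$ whenever $(x,y)\in A\times B$ (because $|x-y|\ge|x_1-y_1|=r$), but more importantly it lets us restrict the whole integral to the region $r\ge\eps$. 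The strategy is: (1) bound $\chi_A,\chi_B$ by $1$ and enlarge $A\times B$ to $\{x_1\in(-1,0)\}\times\{y_1\in(0,1)\}\times\{x_2,y_2\in(-1,1)\}$; (2) perform the $x_2,y_2$ integration first for fixed $r$, using the decay of $g$ to get a bound $O(r)$ (not $O(r^2)$, because the cross-section in one of the two transversal variables is only of length $2$, not all of $\bbR$); (3) integrate the resulting $r^{-2}$ over $\eps\le r\le 2$, producing the $\log(1/\eps)$.

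\textbf{Step 2 in detail.} Fix $r=y_1-x_1>0$. I would first integrate in $x_2$ over $(-1,1)$ and in $y_2$ over $(-1,1)$. Substituting $u=(x_2-y_2)/r$ for fixed $y_2$, one gets
$$
\int_{-1}^{1}\!\!\int_{-1}^{1} \big|g\big(\tfrac{x_2-y_2}{r}\big)\big|\, dx_2\, dy_2
= \int_{-1}^{1}\!\!\Big( r\int_{(-1-y_2)/r}^{(1-y_2)/r} |g(u)|\, du\Big)\, dy_2
\le 2r\int_{\bbR}|g(u)|\, du \le C r,
$$
where the last step uses $\int_\bbR|g|\,du<\infty$, which follows from \eqref{gassu} with $\delta>0$ (since $|g(u)|\lesssim (1+|u|)^{-1-\delta}$ is integrable). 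Multiplying by $r^{-3}$ gives an integrand, after the transversal integrations, bounded by $C r^{-2}$.

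\textbf{Step 3 and conclusion.} It remains to integrate over the $(x_1,y_1)$-variables. With $r=y_1-x_1$, $x_1\in(-1,0)$, $y_1\in(0,1)$, we have $r\in(0,2)$; fixing $r$, the set of admissible $(x_1,y_1)$ with $y_1-x_1=r$ has length $O(1)$ (indeed $\le\min(r,2-r)\le 1$). Hence
$$
\cI \le C\int_{\eps}^{2} r^{-2}\cdot \min(r,1)\, dr \le C\int_{\eps}^{2} r^{-1}\, dr = C\log(2/\eps) \lesssim \log(1/\eps),
$$
where the restriction $r\ge\eps$ comes from $\dist(A,B)>\eps$. This finishes the proof.

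\textbf{The main obstacle.} The only subtlety is making sure one does \emph{not} lose a second power of $r$ in the transversal integration, since a naive bound $\int_\bbR\int_\bbR |g((x_2-y_2)/r)|\,dx_2\,dy_2 = r^2\|g\|_1 \cdot(\text{something divergent})$ would be false and, if one mistakenly used it, would give only $\int_\eps r^{-1}\min(r,1)\,dr = O(1)$ — too strong to be true given the matching lower bound in the Proposition. The point is that exactly one of the two transversal integrations is genuinely over a bounded interval of length $2$ and contributes an $O(1)$ factor, while the other contributes $\int_\bbR|g|<\infty$ after the scaling $u=(x_2-y_2)/r$ produces the Jacobian factor $r$. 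Getting this bookkeeping right — one factor of $r$, not $r^2$ and not $r^0$ — is the whole content of the lemma; everything else is $\int dr/r = \log$.
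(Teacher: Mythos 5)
The argument contains a genuine gap, and it is exactly at the point you flag as the easy part. You claim that $\dist(A,B)>\eps$ forces $r=|x_1-y_1|\ge\eps$ on $A\times B$, citing the inequality $|x-y|\ge|x_1-y_1|=r$. But that inequality runs the wrong way: together with $|x-y|>\eps$ it gives two lower bounds on $|x-y|$ and tells you nothing about the size of $r$. It is entirely possible to have $x\in A$, $y\in B$ with $|x_1-y_1|$ as small as you like and $|x-y|>\eps$ (take $x_2-y_2$ large). So the restriction of the $r$-integral to $r\ge\eps$ in Step~3 is unjustified, and without it $\int_0^2 r^{-1}\,dr$ diverges. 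Your ``main obstacle'' paragraph identifies the $r$-versus-$r^2$ bookkeeping as the content of the lemma, but that bookkeeping alone is not enough; the real content is precisely how the separation condition controls the region $r<\eps$, and this is where the extra decay $\delta>0$ in \eqref{gassu} (which your Step~2 uses only to ensure $\int|g|<\infty$) must be invoked.

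To repair the proof, observe that for $(x,y)\in A\times B$ with $r=|x_1-y_1|<\eps$, the separation forces $|x_2-y_2|>\sqrt{\eps^2-r^2}\gc\eps$, hence $|x_2-y_2|/r\gc\eps/r\gg1$; then the tail bound $\int_{|u|>T}|g(u)|\,du\lc T^{-\delta}$ makes the transversal integral $\lc r(r/\eps)^\delta$ rather than $\lc r$, and the resulting contribution $\int_0^{\eps}(r/\eps)^\delta\,dr/r$ is $O(1)$. This is in substance what the paper does, organized differently: it decomposes dyadically in the ratio $|x_2-y_2|/|x_1-y_1|\approx 2^m$, in each dyadic piece deduces $r\gc 2^{-m}\eps$ from the separation, pays the factor $2^{-m\delta}$ from the decay of $g$, obtains $2^{-m\delta}\log(2^m/\eps)$ for the $m$-th piece, and sums in $m$. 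Either route works; yours would be slightly shorter once the missing step is added, but as written the proposal is incomplete at the one place it cannot afford to be.
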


\begin{proof}
Observe that for $x\in \Omega_L$, $y\in \Omega_R$ we have
$|x_1-y_1|=|x_1|+|y_1|$.

We  consider separately the regions with 
(i) $|x_2-y_2|\le |x_1-y_1|$ 
(which for $x\in A$, $y\in B$ implies $|x_1-y_1|\ge \eps/2$) and
 (ii) $2^{m-1}|x_1-y_1|\le  |x_2-y_2|<2^{m}|x_1-y_1|$ for some $m\ge 1$  (which for $x\in A$, $y\in B$ implies $|x_1-y_1|\ge 2^{-m-2} \eps$).

First,
\begin{align*}&\iint\limits_{\substack{(x,y)\in\Omega_L\times \Omega_R\\|x_2-y_2|\le |x_1-y_1|}}
|x_1-y_1|^{-3} |g(\tfrac{x_2-y_2}{|x_1-y_1|})| 
\chi_B(y)\chi_A(x) dx\, dy 
\\
\lc 
&\iint\limits_{\substack{(x_1,y_1)\in [-1,0]\times[0,1]\\ |x_1-y_1|\ge\eps/2}}
|x_1-y_1|^{-2} \iint_{[-1,1]^2}
\tfrac{1}{|x_1-y_1|} |g(\tfrac{x_2-y_2}{|x_1-y_1|})| 
dx_2dy_2\,\, dx_1dy_1
\\
\lc 
&\|g\|_{L^1(\bbR)} \iint_{\eps/2 <|x_1|+|y_1|\le 2}
\frac{1}{(|x_1|+|y_1|)^2} 
 dx_1dy_1\lc \log(1/\eps)
\end{align*}
Next, when $|x_2-y_2|\approx 2^m|x_1-y_1|$ we have 
$|g(\tfrac{x_2-y_2}{|x_1-y_1|})| \lc 2^{-m(1+\delta)}$ and thus 
\begin{align*}&\iint\limits_{\substack{(x,y)\in\Omega_L\times \Omega_R\\|x_2-y_2|\approx 2^m |x_1-y_1|}}
|x_1-y_1|^{-3} |g(\tfrac{x_2-y_2}{|x_1-y_1|})| 
\chi_B(y)\chi_A(x) dx\, dy 
\\
&\lc 2^{-m\delta}
\iint\limits_{\substack{(x_1,y_1)\in [-1,0]\times[0,1]\\ |x_1-y_1|\ge2^{-m-2}\eps}}
|x_1-y_1|^{-2} dx_1dy_1
\\
&\lc 2^{-m\delta}
\iint_{2^{-m-2}\eps \le |x_1|+y_1|\le2}
\frac{1}{(|x_1|+|y_1|)^2} 
 dx_1dy_1\lc  
2^{-m\delta}\log(2^m/\eps).
\end{align*}
Now sum in $m$ to finish the proof.
\end{proof}

\subsection{\it Lower  bounds}
We now take $g(s) =\frac{s}{(1+s^2)^2}$ and  construct a specific pair $A$, $B$ for which $dist(A,B)\ge \eps$ and $|\cI(A,B)|\gc \log(1/\eps)$.
It suffices to take $\eps=2^{-LM}$ for some integer $L$
(and $M$ be a sufficiently large fixed integer, $M>10$).

Define
\begin{align*}
I^L_k&=[-2^{-kM}, -2^{-kM-1}]\,,\\I^R_k&=[2^{-kM-1}, 2^{-kM}]\,,
\\
J^L_{k,n}&=[(Mn+2)2^{-kM}, (Mn+3)2^{-kM}]\,,
\\J^R_{k,n}&=[Mn2^{-kM}, (Mn+1)2^{-kM}]
\end{align*}
and 
\begin{align*}
A&= \bigcup_{1\le k\le L-1}\,\,\bigcup_{0\le n\le \frac{2^{kM}}{M+1}} 
I^L_k\times J^L_{k,n}\,,
\\
B&= \bigcup_{1\le k\le L-1}\,\,\bigcup_{0\le n\le \frac{2^{kM}}{M+1}}  I^R_k\times J^R_{k,n}\,.
\end{align*}
Observe that $B$ is a vertical translation of the horizontal reflection of $A$ and that both sets consist of columns of squares at $L-1$ many different scales.  Clearly  $A\in \Omega_L$, $B\in \Omega_R$ and $\dist(A,B)\ge 2^{-LM}$.

Let 
$$\cI(k_L, k_R, n_L, n_R)=\int_{x\in I^L_{k_L}\times J^L_{k_L,n_L}}
\int_{y\in I^R_{k_R}\times J^L_{k_R,n_R}} K_{|x_1-y_1|}(x_2-y_2)  \,dy\, dx$$
and split $\cI(A,B)=\cE_1+\cE_2+\cE_3$ where
\begin{align*}
\cE_1&=\sum_{1\le k\le L-1} \,\, \sum_{0\le n\le \frac{2^{kM}}{M+1}} \cI(k,k,n,n)
\\
\cE_2&=\sum_{1\le k\le L-1} \,\,
\sum_{\substack {0\le n_L, n_R\le\frac{ 2^{kM}}{M+1}\\n_L\neq n_R}} \cI(k,k,n_L,n_R)
\\
\cE_3&=\sum_{\substack{1\le k_L, k_R\le L-1\\k_L\neq k_R}} \,\,
\sum_{0\le n_L, n_R\le \frac{2^{kM}}{M+1}} \cI(k_L,k_R,n_L,n_R)\,.
\end{align*}
We prove a lower bound for $\cE_1$ and upper bounds for $\cE_2$, $\cE_3$.

For the lower bound 
observe 
$$2^{-kM}\le x_2-y_2\le 2^{2-kM} \text{ for $x_2\in J^L_{k,n}$, $y_2\in J^L_{k,n}$}.$$
Thus 
\begin{align*}
&\int_{I^L_{k}\times J^L_{k,n}}
\int_{I^R_{k}\times J^L_{k,n}} K_{|x_1-y_1|}(x_2-y_2)  dx\, dy
\\&=
\iiiint\limits_{\substack{
(x_1,y_1)\in [2^{-kM-1}, 2^{-kM}]
\\
(Mn+2) 2^{-kM}\le x_2\le(Mn+3)2^{-kM}
\\
Mn 2^{-kM}\le y_2\le(Mn+1)2^{-kM}}}
\frac{x_2-y_2}{((x_1+y_1)^2+(x_2-y_2)^2)^2} dy_2\,dx_2\,dy_1\, dx_1
\\
&\ge \frac{2^{-kM}}{1000}
\end{align*}
and thus 
$$\cE_1\ge 10^{-3}\sum_{k=1}^{L-1}
\sum_{0\le n_L\le\frac{ 2^{kM}}{M+1}} 2^{-kM}\ge
 \frac{L-1}{10^3(M+1)}.$$

If $n_L\neq n_R$ we have
$$
\int_{I^L_{k}\times J^L_{k,n_L}}
\int_{I^R_{k}\times J^L_{k,n_R}} |K_{|x_1-y_1|}(x_2-y_2)|  dx\, dy \lc \frac{2^{-kM}}{M^3|n_L-n_R|^3}
$$
and thus
$$|\cE_2|\le \sum_{1\le k<L} \sum_{0\le n_L\le\frac{ 2^{kM}}{M+1}}
\sum_{n_R\neq n_L} \frac{2^{-kM}}{M^3|n_L-n_R|^3}\,\le 
  C \frac{L}{M^4}.$$

Next, set $g(s)=|s|(1+s^2)^{-2}$, and
 $$ G(x_1,y_1)=\iint_{-1\le x_2,y_2 \le 1}
 \tfrac{1}{|x_1-y_1|} |g(\tfrac{|x_2-y_2|}{|x_1-y_1|})| dy_2dx_2 
$$
so that $G(x_1,y_1)$ is nonnegative  and uniformly bounded.
We have 
  $|\cE_3|\le  \cE_{3,1}+\cE_{3,2}$
where  
$$\cE_{3,1}=\sum_{1\le k_L< k_R\le L-1} 
\iint_{I^L_{k_L}\times I^R_{k,R}} |x_1-y_1|^{-2} G(x_1,y_1)
\,dy_1dx_1
$$
and 
$\cE_{3,2}$ is the corresponding term with the $(k_L, k_R)$ summation extended over $1\le k_R< k_L\le L-1$.
The two terms are symmetric and it suffices to estimate $\cE_{3,1}$.

Now  $|x_1-y_1|\approx 2^{-k_LM }$ if $x_1\in I^L_{k_L}$, 
$y_1\in I^L_{k_R}$,  and $k_L<k_R$.
Therefore
$$\cE_{3,1}\lc \sum_{1\le k_L< k_R\le L-1} 
2^{2k_L M} |I^L_{k_L}|\,|I^R_{k,R}|\lc 
\sum_{1\le k_L< k_R\le L-1} 
2^{(k_L-k_R) M} \lc 
L 2^{-M}
$$
and similarly we also get 
$\cE_{3,2}\lc L2^{-M}$. Combining the estimates we get
$$\cU(2^{-LM})\ge \cE_1-|\cE_2|-|\cE_3| \ge 10^{-3} \frac{L-1}{M+1} -
C_1 L M^{-3}- C_2 L 2^{-M}$$
 and the assertion follows by choosing $M$ sufficiently large.
\qed

\subsection{{\it A discrete problem}}

The counterexample suggests that to make progress towards the resolution of the $L^1$-conjecture, we need to first understand the effects of shear flows such as the vector field $b$ above.  To highlight this particular difficulty, we propose a simple discrete problem reminiscent of the Rubik's cube.

We mix the discrete torus $\Omega_n = \bbZ^2 / 2 n \bbZ^2$ by applying a sequence of sliding moves.  The goal is to transform the initial set
\begin{equation*}
  A_0 = [1,n] \times [1,2n] + 2 n \bbZ^2
\end{equation*}
into the final set
\begin{equation*}
  A_1 = \{ (x,y) \in \bbZ^2 : (-1)^{x+y} = 1 \}.
\end{equation*}
For integers $0 < b - a < 2 n$, consider the periodic strips $S \subseteq \bbZ^2$ given by
\begin{equation*}
  S = \bbZ \times ([a,b] + 2 n \bbZ)
\end{equation*}
and the permutation $P : \bbZ^2 \to \bbZ^2$ given by
\begin{equation*}
  P(x,y) = (x,y) + (1,0) 1_S(x,y).
\end{equation*}
Such permutations, when composed with an arbitrary number of $90^\circ$ rotations, are the allowed sliding moves.

For this simplified problem, a positive answer to the Bressan's mixing conjecture would imply that it takes at least $c n \log n$ sliding moves to transform $A_0$ into $A_1$.  It is clear from looking at the Cayley graph of the group generated by the finite set of sliding moves, that the diameter of the set of reachable configurations is much larger than $n \log n$.  However, Bressan's conjecture in this context is a statement about the minimal distance between two particular configurations $A_0$ and $A_1$.

\section{A toy problem on $\bbT^2$}\label{toymodel}
Consider the problem of mixing $\bbT^2$ by a finite sequence of $90^\circ$ rotations of squares. Given $x\in \bbT^2$ and $r\in  (0,1/4)$, let $R_{x,r} : \bbT^2 \to \bbT^2$ be the map which rotates the square $(x_1 - r, x_1 + r)\times (x_2 - r, x_2 + r)$  by $90^\circ$  counter-clockwise:
\[
R_{x,r}(y):=\begin{cases}
 (x_1+x_2-y_2,x_2-x_1+y_1) &\text{ if } y-x\in(-r,r)^2, 
 \\ y &\text{otherwise}.
 \end{cases}\]
We assign the {\it cost} $r^2$ to the rotation $R_{x,r}$. To motivate this definition observe that we can write $R_{0,r}(x)=X_r(1,x)$ where $X_r: [0,1]\times \bbT^2\to \bbT^2$ is the incompressible flow that satisfies
 \[ D_tX_r(t,x)= \begin{cases} (0,2x_1) &\text{ if $|x_2|<|x_1|<r$,}
 \\(-2x_2,0) &\text{ if $|x_1|<|x_2|<r$,}
 \\(0,0) &\text{ otherwise,} \end{cases}
 \]
 in the coordinates $(-1/2,1/2)^2$ for the torus $\bbT^2$. The vector field $D_t X_r(t,\cdot)$ is the weakly divergence free square vortex:
 \begin{center}
\bigskip
\begin{tikzpicture}
\draw[thick,dotted] (1,-1) -- (1,1) -- (-1,1) -- (-1,-1) -- (1,-1);
\draw[thick,dotted] (1,1) -- (-1,-1);
\draw[thick,dotted] (1,-1) -- (-1,1);
\draw[thick,->,>=latex] (0.2,-0.2) -- (0.2,0.2);
\draw[thick,->,>=latex] (0.2,0.2) -- (-0.2,0.2);
\draw[thick,->,>=latex] (-0.2,0.2) -- (-0.2,-0.2);
\draw[thick,->,>=latex] (-0.2,-0.2) -- (0.2,-0.2);
\draw[thick,->,>=latex] (0.4,-0.4) -- (0.4,0.4);
\draw[thick,->,>=latex] (0.4,0.4) -- (-0.4,0.4);
\draw[thick,->,>=latex] (-0.4,0.4) -- (-0.4,-0.4);
\draw[thick,->,>=latex] (-0.4,-0.4) -- (0.4,-0.4);
\draw[thick,->,>=latex] (0.6,-0.6) -- (0.6,0.6);
\draw[thick,->,>=latex] (0.6,0.6) -- (-0.6,0.6);
\draw[thick,->,>=latex] (-0.6,0.6) -- (-0.6,-0.6);
\draw[thick,->,>=latex] (-0.6,-0.6) -- (0.6,-0.6);
\draw[thick,->,>=latex] (0.8,-0.8) -- (0.8,0.8);
\draw[thick,->,>=latex] (0.8,0.8) -- (-0.8,0.8);
\draw[thick,->,>=latex] (-0.8,0.8) -- (-0.8,-0.8);
\draw[thick,->,>=latex] (-0.8,-0.8) -- (0.8,-0.8);
\draw[thick] (1,1) coordinate (1);
\end{tikzpicture}
\bigskip
\end{center}
Let $M(\bbT^d)$ be the space of Borel measures on $\bbT^2$. 
Since $$\int_0^1 \|D_xD_t X_r(t,x)\|_{M(\bbT^2)} dt = C r^2$$
our choice for the cost is natural.
  The following result can therefore be considered to solve a discrete toy version of 
Bressan's conjecture. 
\begin{theorem}\label{discretethm} If $R_{x_1,r_1}\circ\cdots\circ R_{x_n,r_n} (0, 1/2)^2$ is mixed to scale $\eps\in (0,1/2)$, then
\Be \sum_{i=1}^n r_i^2 \ge C^{-1}\log \eps^{-1},\Ee 
with a universal constant $C >0$.
\end{theorem}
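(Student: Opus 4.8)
The plan is to mimic by hand the mechanism behind Theorem~\ref{bressan-bianch}: bound the growth of the truncated Bianchini semi-norm $\|\bbone_{\Phi(A)}\|_{\cB(\eps)}$ of the moving set by a cost of the motion, and then invoke the observation from \S\ref{bianchini-section} that a set mixed at scale $\eps$ has Bianchini semi-norm $\gc\log(1/\eps)$. The singular integral / Hardy space input of Theorem~\ref{mainsingularintegral} is genuinely unavailable here, because the square vortex $D_tX_r$ is merely $BV$ and $D_xD_tX_r$ is a measure with a codimension-one singular part, so $\|D_xD_tX_r\|_{h^1}=\infty$. But the rigidity of the individual moves makes a direct argument possible, controlled by the sharper measure-type cost $\int\|D_xD_tX_r\|_{M}\,dt\sim r^2$.

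First I would reduce everything to a single \emph{per-move} estimate: with $A=(0,1/2)^2$ and $Q_{z,s}=(z_1-s,z_1+s)\times(z_2-s,z_2+s)$, the claim is that
\[
\bigl|\,\|\bbone_{R_{z,s}(E)}\|_{\cB(\eps)}-\|\bbone_E\|_{\cB(\eps)}\,\bigr|\ \lc\ s^2,
\]
with an absolute constant, uniformly in $\eps\in(0,1/8)$, in $E\subseteq\bbT^2$, and in $z,s$. Granting this and applying it along the chain $F_n=A$, $F_{k-1}=R_{x_k,r_k}(F_k)$, so that $F_0=R_{x_1,r_1}\circ\cdots\circ R_{x_n,r_n}(A)$, summation gives
\[
\|\bbone_{F_0}\|_{\cB(\eps/2)}\ \le\ \|\bbone_A\|_{\cB(\eps/2)}+C\sum_{i=1}^n r_i^2\ \lc\ 1+C\sum_{i=1}^n r_i^2 ,
\]
where $\|\bbone_A\|_{\cB}\lc 1$ holds because $A$ is a square and, for any set $E$ of finite perimeter, $\int_{\bbT^2}\big|\bbone_E(y)-\intslash_{B_\rho(y)}\bbone_E\big|\,dy\lc\rho\operatorname{Per}(E)$. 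If $F_0$ is mixed at scale $\eps$, the observation in \S\ref{bianchini-section} gives $\|\bbone_{F_0}\|_{\cB(\eps/2)}\gc\log(1/\eps)$, so $\sum_i r_i^2\gc\log(1/\eps)$ whenever $\eps$ is below an absolute constant; the complementary range ($\eps$ bounded away from $0$) is elementary, since $A$ is not mixed at any scale $<1/4$, so $|F_0\triangle A|\gc 1$, while a single move changes a set by at most $4r_i^2$ in measure, giving $|F_0\triangle A|\le 4\sum_i r_i^2$.

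The heart of the matter is the per-move estimate, where rigidity is everything. Write $Q=Q_{z,s}$ and $R=R_{z,s}$; thus $R$ is the identity off $Q$, a $90^\circ$ rotation about $z$ on $Q$, measure-preserving, with $R(Q)=Q$, $R(\partial Q)=\partial Q$. For fixed $\rho\in[\eps,1/4]$ I would split $\bbT^2$ into three regions. (a) Where $B_\rho(y)\subseteq Q$: then $\bbone_{R(E)}(y)=\bbone_E(R^{-1}y)$ and $\intslash_{B_\rho(y)}\bbone_{R(E)}=\intslash_{B_\rho(R^{-1}y)}\bbone_E$ since $R^{-1}$ is a rigid motion on $Q$, and since $\{y:B_\rho(y)\subseteq Q\}$ is $R$-invariant, the substitution $y\mapsto R^{-1}y$ shows this region contributes \emph{equally} to $\|\bbone_{R(E)}\|_{\cB(\eps)}$ and to $\|\bbone_E\|_{\cB(\eps)}$, hence nothing to the difference. (b) Where $B_\rho(y)\cap Q=\emptyset$: the two integrands agree pointwise. (c) Where $B_\rho(y)$ straddles $\partial Q$: there $|E\triangle R(E)|\le 2|Q|$ bounds the integrand by $\bbone_Q(y)+\min(1,Cs^2/\rho^2)$, and one estimates the $(y,\rho)$-integral directly: for $\rho\lesssim s$ the straddling set has measure $\lc s\rho$, contributing $\lc\int_\eps^{Cs}s\,d\rho\lc s^2$; for $\rho>\diam Q$ one has $B_\rho(y)\supseteq Q$ for every $y\in Q$, so the $Q$-part contributes $\lc\int_{\diam Q}^{1/4}|Q|\,(s/\rho)\,\tfrac{d\rho}{\rho}\lc s^2$ (using that the common value $\intslash_{B_\rho(y)}\bbone_{R(E)}=\intslash_{B_\rho(y)}\bbone_E$ varies by only $O(s/\rho)$ over $y\in Q$, against which the substitution again kills the constant part), while for $y\notin Q$ the straddling condition confines $y$ to an annulus of measure $\lc s\rho$ where $\bbone_{R(E)}=\bbone_E$ and $|\intslash_{B_\rho(y)}\bbone_{R(E)}-\intslash_{B_\rho(y)}\bbone_E|\lc s^2/\rho^2$, contributing $\lc\int_{\diam Q}^{1/4}s\rho\cdot\tfrac{s^2}{\rho^2}\,\tfrac{d\rho}{\rho}\lc s^2$. (The regime $s\gtrsim 1$ is trivial since the total change is then bounded by an absolute constant $\lc s^2$.) Summing these pieces proves the claim.

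The main obstacle is the large-scale part of region (c), $\rho\gg s$. A crude treatment there --- bounding the integrand by $1$ on the full $\rho$-neighbourhood of $\partial Q$ --- loses a power and yields only $s$ per move, which is too weak (it would give $\sum_i r_i\gc\log(1/\eps)$ rather than the stronger $\sum_i r_i^2\gc\log(1/\eps)$). One must genuinely exploit that a ball engulfing $Q$ records the \emph{same} average for $\bbone_{R(E)}$ and $\bbone_E$ (measure-preservation of $R$ on $Q$), together with the $O(s/\rho)$ oscillation of that average across $Q$, to recover the missing factor of $s$.
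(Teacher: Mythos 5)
Your argument is correct and follows the same overall strategy as the paper: control the per-move growth of the Bianchini semi-norm by $\lc s^2$ and invoke the lower bound $\gc\log(1/\eps)$ for mixed sets together with $\|\bbone_{(0,1/2)^2}\|_\cB\lc 1$. The paper organizes the per-move estimate more modularly: it first proves, for any measure-preserving $u$, that the increase of $\|\bbone_{u(A)}\|_\cB$ is bounded by $\int_0^{1/4}\tfrac{1}{r|B_r(0)|}\int_{\bbT^d}|u(B_r(x))\triangle B_r(u(x))|\,dx\,dr$ (the same change of variables $y\mapsto u^{-1}y$ that you use to annihilate region (a)), and then separately bounds this ball-distortion integral for $u=R_{z,s}$ by a scaling argument on $r<s$ and an annulus-plus-centre analysis on $r>s$. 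You instead work with the Bianchini integrand directly and exploit the $R$-invariance of $\{y:B_\rho(y)\subseteq Q\}$; the ingredients and geometry are the same, yours just makes the interior cancellation explicit rather than burying it in the fact that $R$ is an isometry on $Q$. One step is a bit too compressed and deserves to be spelled out: in region (c) with $\rho>\diam Q$ and $y\notin Q$, the raw set $\{y:B_\rho(y)\cap Q\ne\emptyset\}$ has measure $\lc\rho^2$, and estimating it crudely with the $s^2/\rho^2$ bound would produce precisely the logarithmic loss you warn against. You must first note that when $B_\rho(y)\supseteq Q$ and $y\notin Q$ the Bianchini difference is identically zero, because $\bbone_{R(E)}(y)=\bbone_E(y)$ (as $y\notin Q$) and $\intslash_{B_\rho(y)}\bbone_{R(E)}=\intslash_{B_\rho(y)}\bbone_E$ (as $R$ preserves $|E\cap Q|$); only then are the remaining $y$ confined to an annulus of width $O(s)$ at radius $\approx\rho$, of measure $\lc s\rho$, against which the $s^2/\rho^2$ bound integrates to $\lc s^2$. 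This is exactly the mechanism you emphasize in your final paragraph, so the fix is only a matter of writing it into the estimate; with that done, the proof is complete and matches the paper's in substance.
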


To see the sharpness of the result  consider the composition
$$R^3_{(\frac 14,\frac 12),\frac 14}\circ R_{(\frac 12,\frac 14), \frac 14}\circ R^2_{(\frac 12,\frac 12),\frac 14}$$
which divides $(0, 1/2)^2$ into four smaller squares, at cost $6r^2$:
\begin{center}
\bigskip
\begin{tabular}{cccc}
\begin{tikzpicture}[scale=0.5]
\filldraw[lightgray] (0,0) -- (1,0) -- (1,1) -- (0,1) -- (0,0);
\filldraw[lightgray] (1,0) -- (2,0) -- (2,1) -- (1,1) -- (1,0);
\filldraw[lightgray] (0,1) -- (1,1) -- (1,2) -- (0,2) -- (0,1);
\filldraw[lightgray] (1,1) -- (2,1) -- (2,2) -- (1,2) -- (1,1);
\draw[thick] (0,0) -- (4,0) -- (4,4) -- (0,4) -- (0,0);
\end{tikzpicture}
&
\begin{tikzpicture}[scale=0.5]
\filldraw[lightgray] (0,0) -- (1,0) -- (1,1) -- (0,1) -- (0,0);
\filldraw[lightgray] (1,0) -- (2,0) -- (2,1) -- (1,1) -- (1,0);
\filldraw[lightgray] (0,1) -- (1,1) -- (1,2) -- (0,2) -- (0,1);
\filldraw[lightgray] (2,2) -- (3,2) -- (3,3) -- (2,3) -- (2,2);
\draw[thick] (0,0) -- (4,0) -- (4,4) -- (0,4) -- (0,0);
\draw[thick,dotted] (1,1) -- (3,1) -- (3,3) -- (1,3) -- (1,1);
\draw[thick,->,>=latex] (1.5,1.5) -- (2.5,1.5) -- (2.5,2.5);
\end{tikzpicture}
&
\begin{tikzpicture}[scale=0.5]
\filldraw[lightgray] (0,0) -- (1,0) -- (1,1) -- (0,1) -- (0,0);
\filldraw[lightgray] (2,0) -- (3,0) -- (3,1) -- (2,1) -- (2,0);
\filldraw[lightgray] (0,1) -- (1,1) -- (1,2) -- (0,2) -- (0,1);
\filldraw[lightgray] (2,2) -- (3,2) -- (3,3) -- (2,3) -- (2,2);
\draw[thick] (0,0) -- (4,0) -- (4,4) -- (0,4) -- (0,0);
\draw[thick,dotted] (1,0) -- (3,0) -- (3,2) -- (1,2) -- (1,0);
\draw[thick,->,>=latex] (1.5,0.5) -- (2.5, 0.5);
\end{tikzpicture}
&
\begin{tikzpicture}[scale=0.5]
\filldraw[lightgray] (0,0) -- (1,0) -- (1,1) -- (0,1) -- (0,0);
\filldraw[lightgray] (2,0) -- (3,0) -- (3,1) -- (2,1) -- (2,0);
\filldraw[lightgray] (0,2) -- (1,2) -- (1,3) -- (0,3) -- (0,2);
\filldraw[lightgray] (2,2) -- (3,2) -- (3,3) -- (2,3) -- (2,2);
\draw[thick] (0,0) -- (4,0) -- (4,4) -- (0,4) -- (0,0);
\draw[thick,dotted] (0,1) -- (2,1) -- (2,3) -- (0,3) -- (0,1);
\draw[thick,->,>=latex] (0.5,1.5) -- (1.5,1.5) -- (1.5,2.5) -- (0.5, 2.5);
\end{tikzpicture}
\end{tabular}
\bigskip
\end{center}
Applying this idea recursively, we see that we can mix to scale $2^{-n}$ at cost $C n r^2$.

\subsection*{\it Proof of Theorem \ref{discretethm}}

We use the Bianchini semi-norm defined in \S\ref{bianchini-section}.
\begin{lemma}
If $u : \bbT^d \to \bbT^d$ is measure preserving, $A \subseteq \bbT^d$, and $\|\bbone_A\|_\cB$ is finite, then
\begin{equation}
\label{Bdifference}
\|\bbone_{u(A)}\|_\cB - \|\bbone_A\|_\cB
 \leq \int_0^{1/4} \frac{1}{r |B_r(0)|} \int_{\bbT^d} \left| u(B_r(x)) \triangle B_r(u(x)) \right| \,dx \,dr.
\end{equation}
\end{lemma}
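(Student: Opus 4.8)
The plan is to turn the measure-preservation of $u$ into a genuine change of variables, and then reduce the whole estimate, pointwise in $(r,x)$, to the elementary fact $\big||A\cap E|-|A\cap F|\big|\le|E\triangle F|$. First I would note that the integrand defining $\|\cdot\|_\cB$ is nonnegative, so the supremum over $\eps$ is the monotone limit as $\eps\to0$ and one has the clean representation
\[
\|\bbone_A\|_\cB=\int_0^{1/4}\int_{\bbT^d}\Big|\bbone_A(x)-\intslash_{B_r(x)}\bbone_A(y)\,dy\Big|\,dx\,\frac{dr}{r},
\]
and likewise for $\bbone_{u(A)}$. Substituting $x=u(z)$ (legitimate since $u$ is a measure-preserving bijection, so $dx=dz$ and $\bbone_{u(A)}(u(z))=\bbone_A(z)$) gives
\[
\|\bbone_{u(A)}\|_\cB=\int_0^{1/4}\int_{\bbT^d}\Big|\bbone_A(z)-\intslash_{B_r(u(z))}\bbone_{u(A)}(y)\,dy\Big|\,dz\,\frac{dr}{r}.
\]

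Next, for fixed $r$ and $z$, I would compare the inner average with $\intslash_{B_r(z)}\bbone_A$. Since $|B_r(u(z))|=|B_r(0)|$ and $u$ preserves measure,
\[
\intslash_{B_r(u(z))}\bbone_{u(A)}(y)\,dy=\frac{|u(A)\cap B_r(u(z))|}{|B_r(0)|}=\frac{|A\cap u^{-1}(B_r(u(z)))|}{|B_r(0)|},
\]
while $\intslash_{B_r(z)}\bbone_A(y)\,dy=|A\cap B_r(z)|/|B_r(0)|$. Applying $\big||A\cap E|-|A\cap F|\big|\le|E\triangle F|$ with $E=u^{-1}(B_r(u(z)))$ and $F=B_r(z)$, and then the identity $|u^{-1}(G)\triangle F|=|G\triangle u(F)|$ (valid for a measure-preserving bijection, using $u(S\triangle T)=u(S)\triangle u(T)$), I obtain
\[
\Big|\intslash_{B_r(u(z))}\bbone_{u(A)}(y)\,dy-\intslash_{B_r(z)}\bbone_A(y)\,dy\Big|\le\frac{1}{|B_r(0)|}\big|B_r(u(z))\triangle u(B_r(z))\big|.
\]

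Finally I would add and subtract $\intslash_{B_r(z)}\bbone_A$ and use the triangle inequality pointwise,
\[
\Big|\bbone_A(z)-\intslash_{B_r(u(z))}\bbone_{u(A)}\Big|\le\Big|\bbone_A(z)-\intslash_{B_r(z)}\bbone_A\Big|+\frac{1}{|B_r(0)|}\big|B_r(u(z))\triangle u(B_r(z))\big|,
\]
then integrate in $z$ over $\bbT^d$ and against $dr/r$ on $(0,1/4)$: by the representation of the first paragraph the left-hand side integrates to $\|\bbone_{u(A)}\|_\cB$, the first term on the right to $\|\bbone_A\|_\cB$, and the second term to the right-hand side of \eqref{Bdifference} (renaming $z$ as $x$ and using that $\triangle$ is symmetric). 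When that right-hand side is infinite the claim is trivial; when it is finite, together with the hypothesis $\|\bbone_A\|_\cB<\infty$ it also justifies the subtraction on the left. The only point requiring care — and the one I would write out most carefully — is the symmetric-difference bookkeeping: the identity $|u^{-1}(G)\triangle F|=|G\triangle u(F)|$ and the bound $\big||A\cap E|-|A\cap F|\big|\le|E\triangle F|$; everything else is a change of variables and the triangle inequality.
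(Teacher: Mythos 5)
Your proposal is correct and follows essentially the same route as the paper's proof: change variables via the measure-preserving map to rewrite $\|\bbone_{u(A)}\|_\cB$ as an integral involving averages of $\bbone_A$ over $u^{-1}(B_r(u(x)))$, then compare with the average over $B_r(x)$ via the (reverse) triangle inequality, and finally bound the difference of averages by $|E\triangle F|/|B_r(0)|$ and transfer back to $|u(B_r(x))\triangle B_r(u(x))|$ using measure preservation. The only cosmetic difference is that you add and subtract the middle term explicitly rather than subtracting the two absolute values and invoking $\bigl||a-b|-|a-c|\bigr|\le|b-c|$; the symmetric-difference bookkeeping you flag as delicate is exactly what the paper compresses into the phrase ``using the fact that $u$ is measure preserving to change variables.''
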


\begin{proof}
We compute $\|\bbone_{u(A)}\|_\cB - \|\bbone_A\|_\cB $ as 
\begin{align*}
&  \int_0^{1/4} \frac{1}{r} \int_{\bbT^d} \left| \chi_{u(A)}(x) - \fint_{B_r(x)} \chi_{u(A)}(y) \,dy \right| \,dx \,dr \\
& \qquad -  \int_0^{1/4} \frac{1}{r} \int_{\bbT^d} \left| \chi_A(x) - \fint_{B_r(x)} \chi_A(y) \,dy \right| \,dx \,dr \\
& =  \int_0^{1/4} \frac{1}{r} \int_{\bbT^d} \left| \chi_{A}(x) - \fint_{u^{-1}(B_r(u(x)))} \chi_A(y) \,dy \right| \,dx \,dr \\
& \qquad -  \int_0^{1/4} \frac{1}{r} \int_{\bbT^d} \left| \chi_A(x) - \fint_{B_r(x)} \chi_A(y) \,dy \right| \,dx \,dr \\
& \leq  \int_0^{1/4} \frac{1}{r} \int_{\bbT^d} \left| \fint_{u^{-1}(B_r(u(x)))} \chi_A(y) \,dy -  \fint_{B_r(x)} \chi_A(y) \,dy \right| \,dx \,dr \\
& \leq \int_0^{1/4} \frac{1}{r |B_r(0)|} \int_{\bbT^d} \left| u(B_r(x)) \triangle B_r(u(x)) \right| \,dx \,dr,
\end{align*}
using the fact that $u$ is measure preserving to change variables.
\end{proof}

\begin{lemma}
There is a constant $C > 0$ such that
\begin{equation}
\label{Brotations}
 \int_0^{1/4} \frac{1}{r |B_r(0)|} \int_{\bbT^2} \left| R_{x,s}(B_r(y)) \triangle B_r(R_{x,s}(y)) \right| \,dy \,dr \leq C s^2,
\end{equation}
for all $x \in \bbT^2$ and $s \in (0,1/4)$.
\end{lemma}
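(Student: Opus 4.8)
The plan is to bound the integrand pointwise in $y$, exploiting that $R:=R_{x,s}$ is the identity on $Q^c:=\bbT^2\setminus Q$ and equals the rotation $\rho(z):=x+\mathrm{Rot}_{90^\circ}(z-x)$ on the open square $Q:=(x_1-s,x_1+s)\times(x_2-s,x_2+s)$. Since $\mathrm{Rot}_{90^\circ}$ preserves $\bbZ^2$, the map $\rho$ is a genuine isometry of $\bbT^2$ with $\rho(Q)=Q$, $\diam Q=2\sqrt2\,s$ and $\rho(B_r(y))=B_r(\rho y)$. Splitting $B_r(y)=(B_r(y)\cap Q)\cup(B_r(y)\cap Q^c)$ and applying $R$ gives $R(B_r(y))=(B_r(\rho y)\cap Q)\cup(B_r(y)\cap Q^c)$, and a short set computation then yields the exact identities
\[
 R(B_r(y))\triangle B_r(R(y))=\begin{cases}\bigl(B_r(y)\triangle B_r(\rho y)\bigr)\cap Q^c,& y\in Q,\\ \bigl(B_r(y)\triangle B_r(\rho y)\bigr)\cap Q,& y\in Q^c\ \ (\text{where }R(y)=y).\end{cases}
\]

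From the first identity, for $y\in Q$ the set is empty unless $\dist(y,\partial Q)<r$ (otherwise $B_r(y)$ and $B_r(\rho y)$, whose centres are equidistant from $\partial Q$, both lie in $Q$), and its measure is at most $|B_r(y)\triangle B_r(\rho y)|\lesssim\min\bigl(r^2,\,r\,\dist(y,\rho y)\bigr)\le\min(r^2,rs)$, since $\dist(y,\rho y)\le\diam Q$. From the second identity, for $y\in Q^c$ the set is empty unless $\dist(y,Q)<r$; moreover, if $\dist(y,Q)+\diam Q<r$ then $Q\subseteq B_r(y)\cap B_r(\rho y)$ and the set is again empty, so it can be nonzero only on the shell $r-2\sqrt2\,s<\dist(y,Q)<r$, where its measure is at most $|B_r(y)\cap Q|+|B_r(\rho y)\cap Q|\le 2\min(\pi r^2,|Q|)\lesssim\min(r^2,s^2)$.

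Now I integrate in $y$, using that $\bigl|\{y\in Q:\dist(y,\partial Q)<r\}\bigr|\le\min(4s^2,8sr)\lesssim\min(s^2,sr)$ (an $r$-collar of the perimeter $8s$ of $Q$) and $\bigl|\{y\in Q^c:r-2\sqrt2\,s<\dist(y,Q)<r\}\bigr|\lesssim sr$ (a shell around $\partial Q$ of width $\min(r,2\sqrt2\,s)$ and length $\lesssim s+r$, hence area $\lesssim(s+r)\min(r,s)\lesssim sr$). This gives
\[
 \int_{\bbT^2}\bigl|R_{x,s}(B_r(y))\triangle B_r(R_{x,s}(y))\bigr|\,dy\ \lesssim\ \min(r^2,rs)\min(s^2,sr)+\min(r^2,s^2)\,sr\ \lesssim\ \begin{cases}sr^3,& r\le s,\\ rs^3,& r>s.\end{cases}
\]
Dividing by $r\,|B_r(0)|\asymp r^3$ and integrating in $r$,
\[
 \int_0^{1/4}\frac{1}{r\,|B_r(0)|}\int_{\bbT^2}\bigl|R_{x,s}(B_r(y))\triangle B_r(R_{x,s}(y))\bigr|\,dy\,dr\ \lesssim\ \int_0^{s}s\,dr+\int_{s}^{1/4}\frac{s^3}{r^2}\,dr\ \lesssim\ s^2,
\]
which is \eqref{Brotations}.

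The one point needing care — and the reason a completely crude estimate fails — is the regime $y\in Q^c$, $r\gg s$: there the naive bound $|R(B_r(y))\triangle B_r(y)|\le 2|B_r(y)\cap Q|$ alone would force a spurious factor $\log(1/s)$ after the $dr/r$ integration, since $|B_r(y)\cap Q|\asymp|Q|$ for all $y$ with $\dist(y,Q)$ somewhat below $r$. This is rescued by the observation that the symmetric difference in fact \emph{vanishes} once $B_r(y)\supseteq Q$ (a ball engulfing the square is moved rigidly by $R$, apart from its part in $Q^c$, which is fixed), so the offending integrand lives only on a thin shell of area $O(sr)$ around $\partial Q$; the same cancellation, together with the elementary estimate for the symmetric difference of two nearby balls, keeps every term of size $O(s^2)$.
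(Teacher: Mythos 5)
Your proof is correct and takes essentially the same route as the paper's: the pointwise bounds $\lesssim\min(r^2,rs)$ near $x$ and $\lesssim\min(r^2,s^2)$ on the shell $|y-x|\approx r$, together with the key observation that the symmetric difference vanishes once $B_r(y)\supseteq Q$, are exactly the case analysis the paper uses for $r\ge s$. The only divergence is cosmetic: you derive clean set identities via the fact that $\rho$ is a torus isometry and then integrate directly in the regime $r\le s$, whereas the paper disposes of that regime by a one-line scaling argument.
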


\begin{proof}
By scaling, observe that
\begin{multline*}
\int_0^s \frac{1}{r |B_r(0)|} \int_{\bbT^2} \left| R_{x,s}(B_r(y)) \triangle B_r(R_{x,s}(y)) \right| \,dy \,dr \\
\leq s^2 \int_0^{1/4} \frac{1}{r |B_r(0)|} \int_{\bbT^2} \left| R_{x,1/4}(B_r(y)) \triangle B_r(R_{x,1/4}(y)) \right| \,dy \,dr
= C s^2.
\end{multline*}
Next, observe that if $r \geq s$ and
\begin{equation*}
 \left| R_{x,s}(B_r(y)) \triangle B_r(R_{x,s}(y)) \right| > 0,
\end{equation*}
then either
\begin{equation*}
\left| R_{x,s}(B_r(y)) \triangle B_r(R_{x,s}(y)) \right| \leq C s^2 \quad \mbox{and} \quad r - \sqrt{2} s \leq |y - x| \leq r + \sqrt{2} s,
\end{equation*}
or
\begin{equation*}
\left| R_{x,s}(B_r(y)) \triangle B_r(R_{x,s}(y)) \right| \leq C s r \quad \mbox{and} \quad |y - x| < \sqrt 2 s.
\end{equation*}
In particular, we may estimate
\begin{multline*}
\int_s^{1/4} \frac{1}{r |B_r(0)|} \int_{\bbT^2} \left| R_{x,s}(B_r(y)) \triangle B_r(R_{x,s}(y)) \right| \,dy \,dr \\
\leq \int_s^{1/4} \frac{1}{r |B_r(0)|} C s^3 r \,dr \leq C s^2
\end{multline*}
Putting these two estimates together gives \eqref{Brotations}.
\end{proof}

\begin{proof}[Proof of Theorem \ref{discretethm}, conclusion]
If $A$ is mixed to scale $\eps \in (0,\kappa)$,  with mixing constant $\kappa$ then the average of $\bbone_A$ over $B_r(x)$ lies between
$\kappa |A| $ and $(1-\kappa)|A|$ when $r \geq \eps. $ Thus
\[
\|\bbone_A\|_{\cB}\geq \kappa\int_\eps^{\kappa} \frac{1}{r} \min \{ |A|, (1 - |A|) \} \ge  \frac{1}{C} \min \{ |A|, 1 - |A| \}  \log \eps^{-1}.
\]
Combine this with 
 \eqref{Bdifference}, and \eqref{Brotations} to conclude the proof.
\end{proof}

\noi{\it Remark.} This  $L^1$-type Bressan result for the toy problem  is possible since the natural scale 
$s$ for the rotation $R_{y,s}$ is linked in the proof with the scale $r$ in the Bianchini semi-norm,
with maximal contributions for $r\approx s$. 

\bibliographystyle{amsalpha}

\newpage
\end{document}